\newtheorem{theorem}{Theorem}[section]
\newtheorem{prop}[theorem]{Proposition}
\newtheorem{definition}[theorem]{Definition}
\DeclareMathAlphabet{\mathpzc}{OT1}{pzc}{m}{it}
\numberwithin{equation}{section}
\newcommand{\I}{\mathrm{i}}
\newcommand{\E}{\mathrm{e}}
\DeclareMathDelimiter{\Norm}{\mathord}{largesymbols}{"3E}{largesymbols}{"3E}
\begin{document}
\baselineskip 16pt
\parskip 8pt
\sloppy


\title{Torus Knots and Quantum Modular Forms}


\author[K. Hikami]{Kazuhiro \textsc{Hikami}}

\address{Faculty of Mathematics,
  Kyushu University,
  Fukuoka 819-0395, Japan.}

\email{
  \texttt{khikami@gmail.com}
}

\author[J. Lovejoy]{Jeremy \textsc{Lovejoy}}

\address{CNRS, LIAFA,
Universite Denis Diderot - Paris 7,
Case 7014,
75205 Paris Cedex 13
France}

\email{
  \texttt{lovejoy@math.cnrs.fr}
}


\date{\today}

\begin{abstract}
In this paper we compute a $q$-hypergeometric expression for the
cyclotomic expansion of the colored Jones polynomial for the
left-handed torus knot
$(2,2t+1)$
and use this to define a family of quantum modular forms which are dual to the generalized Kontsevich-Zagier series.   
\end{abstract}


\keywords{
}

\subjclass[2010]{
}


\maketitle

\section{Introduction and Statement of Results}

Zagier introduced quantum modular
forms~\cite{Zagier09a} as analogues of modular forms
which behave nicely at roots of unity.  The word \emph{quantum} refers to the fact that 
these objects have ``the `feel' of the objects in perturbative quantum field theory'' \cite[p. 659]{Zagier09a}.   A
celebrated example is the Kontsevich--Zagier series~\cite{DZagie01a}
\begin{equation}
  \label{KZ_F}
  F(q) 
  := \sum_{n=0}^\infty (q)_n,
\end{equation}
where we use the standard notation, 
\begin{equation}
(a)_n := (1-a)(1-aq) \cdots (1-aq^{n-1}).
\end{equation}
Note that $F(q)$ does not converge on any open subset of~$\mathbb{C}$,
but it is well-defined at roots of unity.

Bryson et al ~\cite{BryOnoPitRho12a} recently established a relationship between
$F(q)$ and the generating function for strongly unimodal sequences,
\begin{equation}
  U(x;q) 
  :=
  \sum_{n=0}^\infty
  (-x \, q)_n \, (-x^{-1} \, q)_n  \, q^{n+1} .
\end{equation}
Precisely,
$U(-1;q)$ is dual to $F(q)$
at the root of unity~$\zeta_N=\E^{2\pi \I/N}$,
\begin{equation}
  \label{F_and_U}
  F(\zeta_N^{~-1}) = U(-1;\zeta_N) .
\end{equation}
Note that contrary to the Kontsevich--Zagier series $F(q)$, the function~$U(x;q)$ converges for generic~$|q| < 1$.  

This article is based on the observation that
the identity~\eqref{F_and_U}
can be interpreted in terms of quantum topology as follows.
We use the $N$-colored Jones polynomial $J_N(K;q)$
for a knot~$K$, which
is based on the $N$-dimensional representation of $U_q(s\ell_2)$
(see \emph{e.g.}~\cite{Licko97Book}).
Throughout this article we use a normalization
$
  J_N(\text{unknot};q)=1
$.
It is known that
\begin{equation}
  \label{mirror_J}
  J_N(K;q^{-1})= J_N(K^*;q)  ,
\end{equation}
where $K^*$ is a mirror image of $K$.
The colored Jones polynomial for the right-handed trefoil $T_{(2,3)}$
and the left-handed trefoil $T_{(2,3)}^*$ may be respectively written as
(see \emph{e.g.}~\cite{Habiro00a,GMasb03a,TQLe03a})
\begin{align}
  J_N(T_{(2,3)}; q)& =
  q^{1-N}\sum_{n=0}^\infty q^{- n N} (q^{1-N})_n ,
  \label{J_right_trefoil}
  \\
  J_N(T_{(2,3)}^*; q)
  &  =
  \sum_{n=0}^\infty
  q^n \,  (q^{1-N})_n \, (q^{1+N})_n .
  \label{J_left_trefoil}
\end{align}
It should be remarked that
in these $q$-hypergeometric expressions
an  infinite series terminate at finite~$n$.
One finds that the duality~\eqref{F_and_U} is a consequence
of~\eqref{mirror_J}.

Based on the colored Jones polynomial for the torus knot $T_{(2,2t+1)}$ at roots of unity,
the first author ~\cite{KHikami04a} introduced a family of quantum modular forms generalizing $F(q)$~\eqref{KZ_F},
\begin{equation}
  \label{define_Ft}
  F_t(q)
  :=
  q^{t}
  \sum_{k_t \geq \dots \geq k_1 \geq 0}^\infty
  (q)_{k_t} \, 
  \prod_{i=1}^{t-1} q^{k_i(k_i+1)} 
  \begin{bmatrix}
    k_{i+1} \\
    k_i
  \end{bmatrix}_q .
\end{equation}
Here $\begin{bmatrix}
    n \\
    k
  \end{bmatrix}_q$ is the usual $q$-binomial coefficient,

\begin{equation}
\begin{bmatrix}
    n \\
    k
  \end{bmatrix}_q := \frac{(q)_n}{(q)_{n-k}(q)_k}.
\end{equation}	
Note that when $t=1$ we recover the Kontsevich--Zagier series,
$F_{t=1}(q)=q \, F(q)$.
Our purpose in this article is to use the perspective of quantum invariants to generalize
$U(x;q)$ and~\eqref{F_and_U}.
As a dual to  $F_t(q)$, we make the following definition.
\begin{definition} \label{Utdef}
The generalized $U$-function $U_t(x;q)$ is defined by
\begin{equation}
  \label{define_Ut}
  U_t(x;q)
  :=
  q^{-t}
  \sum_{k_t  \geq \cdots \geq k_1 \geq 1}
  (-xq)_{k_t-1}(-x^{-1}q)_{k_t-1} \,
  q^{k_t}\prod_{i=1}^{t-1} q^{k_i^2}
  \begin{bmatrix}
    k_{i+1} + k_i - i   + 2\sum_{j=1}^{i-1}k_j \\
    k_{i+1} - k_i
  \end{bmatrix}_q .
\end{equation}
\end{definition}


Our first result is the following generalization of~\eqref{F_and_U}.

\begin{theorem}
  \label{thm:Ft_and_Ut}
  \begin{equation}
    F_t(\zeta_N^{~-1})
    =
    U_t(-1 ; \zeta_N) .
  \end{equation}
\end{theorem}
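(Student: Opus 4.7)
The strategy is to lift the identity to an equality between colored Jones polynomials of $T_{(2,2t+1)}$ and $T_{(2,2t+1)}^{*}$ evaluated at reciprocal roots of unity, and then invoke the mirror identity \eqref{mirror_J}. This reproduces the architecture of the $t=1$ case: the duality \eqref{F_and_U} is a formal consequence of \eqref{mirror_J} once one recognizes $F$ as the $q\to\zeta_N^{-1}$ specialization of \eqref{J_right_trefoil} and $U(-1;\cdot)$ as the $q\to\zeta_N$ specialization of \eqref{J_left_trefoil}.

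One ingredient is already known: the series $F_t(q)$ was introduced in \cite{KHikami04a} precisely so that, up to an elementary power of $\zeta_N$, one has
\begin{equation*}
F_t(\zeta_N^{-1}) \;=\; c_{N,t}\, J_N(T_{(2,2t+1)};\zeta_N^{-1})
\end{equation*}
for a simple prefactor $c_{N,t}$. I would simply import this.

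The substantive step is the companion formula: a $q$-hypergeometric expansion of $J_N(T_{(2,2t+1)}^{*};q)$, valid for generic $|q|<1$, which specializes at $q=\zeta_N$ to $c_{N,t}\, U_t(-1;\zeta_N)$. Concretely I would aim to establish, as an identity for generic $q$, something of the shape
\begin{equation*}
J_N(T_{(2,2t+1)}^{*};q) \;=\; c_{N,t}\, q^{-t}\!\!\sum_{k_t\geq\cdots\geq k_1\geq 1}\!\!(q^{1-N})_{k_t-1}(q^{1+N})_{k_t-1}\, q^{k_t}\prod_{i=1}^{t-1}q^{k_i^{2}}\begin{bmatrix} k_{i+1}+k_i-i+2\sum_{j=1}^{i-1}k_j \\ k_{i+1}-k_i \end{bmatrix}_q.
\end{equation*}
At $q=\zeta_N$ both $(q^{1-N})_{k_t-1}$ and $(q^{1+N})_{k_t-1}$ collapse to $(q)_{k_t-1}$, which matches $(-xq)_{k_t-1}(-x^{-1}q)_{k_t-1}$ at $x=-1$, so the right-hand side becomes exactly $c_{N,t}\, U_t(-1;\zeta_N)$ by Definition \ref{Utdef}. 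To prove the identity, I would start from a known closed form for $J_N(T_{(2,2t+1)};q)$ — a Rosso--Jones cabling formula or a Habiro-type cyclotomic expression — apply $q\mapsto q^{-1}$ to pass to the mirror via \eqref{mirror_J}, and then proceed by induction on $t$, introducing each new summation index $k_i$ by a single application of the $q$-binomial theorem (or a contiguous relation for $q$-Pochhammer symbols), while tracking the accumulating exponent $q^{k_i^{2}}$ and the offsets $k_{i+1}+k_i-i+2\sum_{j<i}k_j$ inside the $q$-binomial coefficients.

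The main obstacle is precisely this last bookkeeping. Any sequence of elementary $q$-series manipulations produces some nested multisum, but reproducing the exact combinatorial shape of Definition \ref{Utdef} — the particular offsets in the $q$-binomials, the signs in the shifts, and the quadratic exponents $q^{k_i^{2}}$ — forces a rather rigid choice of which Pochhammer to split and in which direction at each inductive step, and it is at this point that essentially all the technical labor sits. Once the cyclotomic formula for $J_N(T_{(2,2t+1)}^{*};q)$ is secured, Theorem \ref{thm:Ft_and_Ut} follows immediately: \eqref{mirror_J} gives $J_N(T_{(2,2t+1)};\zeta_N^{-1}) = J_N(T_{(2,2t+1)}^{*};\zeta_N)$, and the two cyclotomic expansions convert this equality into $F_t(\zeta_N^{-1}) = U_t(-1;\zeta_N)$ after cancellation of the common prefactor $c_{N,t}$.
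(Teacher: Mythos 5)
Your top-level architecture is exactly the paper's: use \eqref{J_and_Ft_unity} to identify $F_t(\zeta_N^{-1})$ with $J_N(T_{(2,2t+1)};\zeta_N^{-1})$, pass to the mirror via \eqref{mirror_J}, and show that $J_N(T_{(2,2t+1)}^*;q)$ has a Habiro-type cyclotomic expansion whose specialization at $q=\zeta_N$ (where $(q^{1\pm N})_n$ collapses to $(q)_n$) is $U_t(-1;\zeta_N)$. Your reductions are correct, and the prefactor $c_{N,t}$ you allow for is in fact $1$. The problem is that the one substantive claim --- the identity $J_N(T_{(2,2t+1)}^*;q)=U_t(-q^N;q)$, equivalently the statement that the Habiro coefficients $C_n(T_{(2,2t+1)}^*;q)$ in \eqref{J_and_C} equal the summand of Definition~\ref{Utdef} --- is exactly where the theorem lives, and you do not prove it. You describe an intended induction on $t$ via the $q$-binomial theorem and then explicitly concede that ``essentially all the technical labor sits'' there. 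Since Definition~\ref{Utdef} is reverse-engineered from this computation, asserting the identity ``of the shape'' of \eqref{define_Ut} without deriving the specific offsets $k_{i+1}+k_i-i+2\sum_{j<i}k_j$ and the exponents $q^{k_i^2}$ is not a proof; nothing in your outline certifies that the induction closes on precisely that summand.

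For comparison, the paper fills this gap in two stages, neither of which proceeds by induction on $t$ applied directly to the Jones polynomial. First, Proposition~\ref{prop:C_multisum} uses the inverse cyclotomic transform \eqref{C_from_J} to read off the required $\alpha_n'$, and then realizes it as a difference $-\alpha_n'+\alpha_n''$ of two explicit Bailey pairs relative to $1$ imported from \cite{Lovej14a}, yielding a $(2t-1)$-fold multisum for $-q^{t-n}C_{n-1}(T_{(2,2t+1)}^*;q)$. Second, Proposition~\ref{prop:Ct_hypergeometric} telescopes that multisum down to the $t$-fold sum of \eqref{Cfunc_simple}--\eqref{Cfunc_simple2} by repeated application of the two summation lemmas \eqref{q_sum_lemma_1} and \eqref{q_sum_lemma_2}; it is this telescoping that produces the quadratic exponents and the $q$-binomial offsets in Definition~\ref{Utdef}. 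If you want to salvage your route, you would need to either carry out your induction in full or substitute an argument of this kind; as written, the proposal establishes only the easy perimeter of the theorem.
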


Our second result is a Hecke-type expansion for $U_t(x;q)$.

\begin{theorem} \label{thm:HeckeUt}
We have 
\begin{align}
  &U_t(-x;q)
  \nonumber \\
  &=
  \begin{aligned}[t]
    & -q^{-t/2-1/8} \frac{(xq)_{\infty}(q/x)_{\infty}}{(q)_{\infty}^2} 
    \\
    & \quad \times
    \Bigg(\sum_{\substack{r,s \geq 0 \\ r \not \equiv s
        \pmod{2}}} - \sum_{\substack{r,s < 0 \\ r \not \equiv s
        \pmod{2}}}
    \Bigg)
    \frac{(-1)^{\frac{r-s-1}{2}}
      q^{\frac{1}{8}r^2
        + \frac{4t+3}{4} r s
        + \frac{1}{8}s^2
        +  \frac{2+t}{2} r+\frac{t}{2} s }
    }{
      1-x q^{\frac{r+s+1}{2} }
    }
  \end{aligned}
  \label{HeckeUteq1}
  \\ 
  &=
  \begin{aligned}[t]
    & -q^{-t/2-1/8} \frac{(xq)_{\infty}(q/x)_{\infty}}{(q)_{\infty}^2} 
    \\
    &  \times
    \Bigg(\sum_{\substack{r,s,u \geq 0 \\ r \not \equiv s
        \pmod{2}}} + \sum_{\substack{r,s,u < 0 \\ r \not \equiv s
        \pmod{2}}}
    \Bigg)
    (-1)^{\frac{r-s-1}{2}} x^u
    q^{\frac{1}{8}r^2+\frac{4t+3}{4} r s +
      \frac{1}{8}s^2
      +
      \frac{2+t}{2} r
      +\frac{t}{2} s  +
      u \frac{r+s+1}{2}}. 
  \end{aligned}
  \label{HeckeUteq2}
\end{align}
\end{theorem}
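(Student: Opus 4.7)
My plan is to reduce the two displayed identities to each other, then prove one of them via Bailey-pair manipulation followed by a Hecke-Rogers-type transformation.

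First I would observe that \eqref{HeckeUteq1} and \eqref{HeckeUteq2} are formally equivalent. Expanding $1/(1-xq^{(r+s+1)/2})$ as a geometric series gives, for $r,s\ge 0$ (where the exponent $(r+s+1)/2 \ge 1/2$ is positive), $\sum_{u\ge 0} x^u q^{u(r+s+1)/2}$. For $r,s<0$ the exponent is negative, and one uses the formal identity $1/(1-y) = -\sum_{u\le -1} y^u$; the extra minus sign cancels the minus sign in front of the second sum in \eqref{HeckeUteq1}, producing the plus sign and the range $u<0$ of \eqref{HeckeUteq2}. Hence it suffices to establish \eqref{HeckeUteq1}.

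To prove \eqref{HeckeUteq1}, I would write, after the change $x\to -x$ in \eqref{define_Ut},
\[
U_t(-x;q) = q^{-t}\sum_{k\ge 1}(xq)_{k-1}(q/x)_{k-1}\,q^{k}\,\beta_k^{(t)},
\]
where $\beta_k^{(t)}$ denotes the $(t-1)$-fold inner sum from \eqref{define_Ut}. The structure of $\beta_k^{(t)}$ — a nested sum with weights $q^{k_i^2}$ and the specific $q$-binomial coefficients in \eqref{define_Ut} — is exactly that of a $\beta$-sequence obtained by $t-1$ iterations of the Bailey chain with $\rho_1,\rho_2\to\infty$, starting from an explicit seed Bailey pair. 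I would then apply one last instance of Bailey's lemma with parameters chosen so that the factors $(xq)_{k-1}(q/x)_{k-1}$ are absorbed, producing the prefactor $(xq)_\infty (q/x)_\infty/(q)_\infty^2$ and reducing $U_t(-x;q)$ to a single sum over $k$ involving the corresponding iterated $\alpha_k^{(t)}$, which is a simple product of a sign, an explicit linear factor, and a power of $q$ quadratic in $k$.

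The final step is to rewrite this single sum as an indefinite double sum. A Hecke-Rogers-style expansion of $\alpha_k^{(t)}$ produces the double sum over $(r,s)$ with quadratic form $\tfrac18 r^2+\tfrac{4t+3}{4}rs+\tfrac18 s^2$, linear terms $\tfrac{2+t}{2}r+\tfrac{t}{2}s$, sign $(-1)^{(r-s-1)/2}$, and polar denominator $1/(1-xq^{(r+s+1)/2})$. The parity restriction $r\not\equiv s\pmod 2$ arises from a symmetrization step producing a factor $1-(-1)^{r-s}$. The hardest part of the proof is the middle step: pinpointing the seed Bailey pair whose $(t-1)$-fold Bailey-chain iterate matches the multi-sum of Definition \ref{Utdef} exactly, and then tracking signs, $q$-shifts, and $t$-dependent data through the iteration so that the final Hecke-Rogers step lands precisely on the stated quadratic form and linear terms. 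Both the quadratic and linear parts depend on $t$, and a misalignment at any stage propagates through everything that follows, so verifying the fit is the real content of the argument.
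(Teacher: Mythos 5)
Your outline has the right skeleton, and it coincides with the paper's strategy at the top level: the equivalence of \eqref{HeckeUteq1} and \eqref{HeckeUteq2} via geometric series (with the sign flip for negative exponents) is exactly the paper's final step, and \eqref{HeckeUteq1} is indeed obtained by feeding a Bailey pair relative to $1$ into the limiting form of Bailey's lemma with $a=1$, $b=1/c=x$, which is what produces the prefactor $(xq)_\infty(q/x)_\infty/(q)_\infty^2$. But the step you defer as ``the hardest part'' is the actual content, and your description of what should come out of it is wrong in a way that would derail everything after it. The required Bailey pair is the one already established in Proposition~\ref{prop:C_multisum} (via \eqref{define_Ut2}, $\beta_n'=-q^{t-n}C_{n-1}(T^*_{(2,2t+1)};q)$), and its $\alpha$-side is $\alpha_n'=q^{(t+1)n^2-n}(1-q^{2n})\sum_{j=-n}^{n-1}(-1)^jq^{-((2t+1)j^2+(2t-1)j)/2}$. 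This is emphatically \emph{not} ``a simple product of a sign, an explicit linear factor, and a power of $q$ quadratic in $k$'': it carries an inner sum over $j$ with $-n\le j\le n-1$, and it is precisely this triangular region in $(n,j)$ that becomes the quadrants $r,s\ge 0$ and $r,s<0$ under $n=\frac{r+s+1}{2}$, $j=\frac{r-s-1}{2}$, yielding the indefinite form $\tfrac18r^2+\tfrac{4t+3}{4}rs+\tfrac18s^2$. A monomial $\alpha$ can only ever produce an Appell--Lerch single sum, so there is no subsequent ``Hecke--Rogers expansion'' that could manufacture the double sum; the indefinite quadratic form has to be present in $\alpha_n'$ from the start.

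Two further gaps. First, Bailey's lemma delivers a summand with denominator $(1-xq^n)(1-q^n/x)$, not the single pole of \eqref{HeckeUteq1}; you need the partial-fraction split $\frac{1-q^{2n}}{(1-xq^n)(1-q^n/x)}=\frac{1}{1-xq^n}+\frac{q^n}{x(1-q^n/x)}$ followed by the reflection $n\mapsto-n$ in the second piece to assemble the $\bigl(\sum_{r,s\ge0}-\sum_{r,s<0}\bigr)$ structure with the single factor $1-xq^{(r+s+1)/2}$; this step is absent from your plan. Second, the parity restriction $r\not\equiv s\pmod 2$ is not produced by a symmetrization factor $1-(-1)^{r-s}$; it is simply the integrality condition on $n=\frac{r+s+1}{2}$ and $j=\frac{r-s-1}{2}$, i.e.\ $r+s$ must be odd. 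With Proposition~\ref{prop:C_multisum} cited for the Bailey pair and these two steps supplied, your argument becomes the paper's proof.
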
 



The paper is constructed as follows.  In Section 2, we review Bailey pairs and their relation to the colored Jones polynomial.  In Section 3 we study the colored Jones polynomial for the torus knot $T_{(2,2t+1)}$.  In particular, we use the Bailey pair machinery to compute the coefficients of the cyclotomic expansion of $J_N(T^*_{(2,2t+1)};q)$, which leads to Theorem~\ref{thm:Ft_and_Ut}.  In Section 4 we prove Theorem~\ref{thm:HeckeUt}, again using the Bailey machinery.  In Section 
5 we extend Theorems~\ref{thm:Ft_and_Ut} and~\ref{thm:HeckeUt} to the vector-valued setting.   We close with some suggestions for future research and an appendix containing some examples. 


\section{Bailey Pairs and The Colored Jones Polynomial}
In this section we review facts about Bailey pairs and their relation to the colored Jones polynomial.

First recall~\cite{Andre84a} that two sequences $(\alpha_n,\beta_n)$ form a Bailey pair relative to $a$ if
\begin{equation} \label{Baileypairdef1}
\beta_n = \sum_{j=0}^n \frac{\alpha_j}{(q)_{n-j}(aq)_{n+j}},
\end{equation} 
or equivalently,
\begin{equation} \label{Baileypairdef2}
  \alpha_n
  = \frac{1 - aq^{2n} }{1-a} \,
  \frac{(a)_n}{(q)_n} \,
  (-1)^nq^{n(n-1)/2}
  \sum_{j=0}^n (q^{-n})_j(aq^n)_jq^j\beta_j.
\end{equation} 
The Bailey lemma \cite{Andre84a} states that if $(\alpha_n,\beta_n)$  is a Bailey pair relative to $a$, then so is $(\alpha_n',\beta_n')$, where 
\begin{equation} \label{alphaprimedef}
\alpha'_n = \frac{(b)_n(c)_n(aq/bc)^n}{(aq/b)_n(aq/c)_n}\alpha_n
\end{equation} 
and
\begin{equation} \label{betaprimedef}
\beta'_n = \sum_{k=0}^n\frac{(b)_k(c)_k(aq/bc)_{n-k} (aq/bc)^k}{(aq/b)_n(aq/c)_n(q)_{n-k}} \beta_k.
\end{equation} 
In particular, if $b,c \to \infty$ then we have
\begin{equation} \label{rho1rho2inftyalpha}
\alpha'_n = a^nq^{n^2}\alpha_n
\end{equation}
and
\begin{equation} \label{rho1rho2inftybeta}
\beta'_n = \sum_{k=0}^n \frac{a^kq^{k^2}}{(q)_{n-k}}\beta_k.
\end{equation}
Inserting \eqref{alphaprimedef} and~\eqref{betaprimedef} back in the
definition~\eqref{Baileypairdef1} and letting $n \to \infty$, we have
\begin{equation}
\sum_{n \geq 0} (b)_n(c)_n(aq/bc)^n \beta_n = \frac{(aq/b)_{\infty}(aq/c)_{\infty}}{(aq)_{\infty}(aq/bc)_{\infty}} \sum_{n \geq 0} \frac{(b)_n(c)_n(aq/bc)^n}{(aq/b)_n(aq/c)_n} \alpha_n.
\end{equation}

Next recall the cyclotomic expansion of the colored Jones polynomial due to
Habiro~\cite{KHabiro06b}
\begin{equation}
  \label{J_and_C}
  J_N(K;q)
  =
  \sum_{n=0}^\infty
  C_n(K; q) \, (q^{1+N})_n \, (q^{1-N})_n ,
\end{equation}
where we have
\begin{equation} \label{C_nLaurent}
C_n(K; q) \in \mathbb{Z}[q,q^{-1}].
\end{equation}
The colored Jones polynomial $J_N(K;q)$ and the coefficients $C_n(K;q)$
defined in~\eqref{J_and_C} can be regarded
as a Bailey pair
$(\alpha_n,\beta_n)$ relative to $q^2$.
Namely, comparing equations~\eqref{J_and_C} and~\eqref{Baileypairdef2} we have (see also~\cite{KHabiro06b,KHikami06b})
\begin{equation}
  \label{Bailey_JC}
  \begin{aligned}
    \alpha_n
    & =
    \frac{
      (1-q^{n+1}) \, (1-q^{2n+2})
    }{(1-q) \, (1-q^2)} \,
    (-1)^n \, q^{\frac{1}{2} n (n-1)} \,
    J_{n+1}(K;q) ,
    \\[2mm]
    \beta_n & =
    q^{-n} \, C_n(K;q) .
  \end{aligned}
\end{equation}
Equation~\eqref{Baileypairdef1} gives the inverse transform 
\begin{equation}
  \label{C_from_J}
  C_n(K; q)
  =
  - q^{n+1} \sum_{\ell = 1}^{n+1}
  \frac{
    (1-q^\ell) \, (1 - q^{2 \ell})
  }{
    (q)_{n+1-\ell} \, (q)_{n+1+\ell}
  } \,
  (-1)^\ell \, q^{\frac{1}{2} \ell(\ell-3)} \,
  J_\ell(K;q) .
\end{equation}


\section{The Colored Jones Polynomial for Torus Knots}
For some  knots $K$, explicit forms of $J_N(K;q)$ and/or $C_N(K;q)$ are known in
the literature.  For instance, when $K$ is the right-handed torus knot
$T_{(s,t)}$,
where $s$ and $t$ are coprime positive integers,
the colored Jones polynomial is given by~\cite{Mort95a,RossJone93a}
\begin{equation}
  \label{J_Morton}
  J_N(T_{(s,t)}; q)
  =
  \frac{q^{\frac{1}{4} s t (1-N^2)}}{
    q^{\frac{N}{2}} - q^{-\frac{N}{2}}
  }
  \sum_{j= - \frac{N-1}{2}}^{\frac{N-1}{2}}
  q^{s t j^2} \,
  \left(
    q^{-(s+t) j + \frac{1}{2}} -
    q^{-(s-t) j - \frac{1}{2}}
  \right) .
\end{equation}
Using difference equations,
the first author ~\cite{KHikami02c} constructed a $q$-hypergeometric expression 
for $J_N(T_{(s,t)}; q)$ when $s=2$,
\begin{equation}
  J_N(T_{(2,2t+1)}; q)
  =
  q^{t(1-N)}
  \sum_{k_t \geq \dots \geq k_1 \geq 0}^\infty
  (q^{1-N})_{k_t} \, q^{-N k_t} \,
  \prod_{i=1}^{t-1} q^{k_i(k_i+1-2N)} 
  \begin{bmatrix}
    k_{i+1} \\
    k_i
  \end{bmatrix}_q .
\end{equation}
(See ~\cite{KHikami03b} for similar expressions for some other torus knots.)
Comparing this with the generalized Kontsevich--Zagier
series~\eqref{define_Ft}, we find that $J_N(T_{(2,2t+1)}; q)$ and $F_t (q)$ agree at roots of
unity,
\begin{equation}
  \label{J_and_Ft_unity}
  J_N(T_{(2,2t+1)}; \zeta_N)
  =
  F_t (\zeta_N) .
\end{equation}

With~\eqref{mirror_J} and~\eqref{J_and_C} in mind, we see that to
discover Definition~\ref{Utdef} and prove Theorem~\ref{thm:Ft_and_Ut}
we need to compute the cyclotomic expansion of the colored Jones
polynomial of the left-handed torus knot $T_{(2,2t+1)}^*$.
Recalling that the colored Jones polynomial for  the mirror image
$K^*$ is given from that for $K$~\eqref{mirror_J},
we find from~\eqref{J_Morton} that
\begin{equation}
  \label{left_hand_J}
  (1-q^N) \, J_N(T_{(2,2t+1)}^*;q)
  =
  (-1)^N q^{- t +\frac{1}{2} N + \frac{2t+1}{2} N^2 }
  \sum_{k=-N}^{N-1} (-1)^k q^{-\frac{2t+1}{2}k(k+1)+k}.
\end{equation}
Then the coefficients $C_n$ in the
cyclotomic expansion~\eqref{J_and_C} are given 
from the inverse transform~\eqref{C_from_J} as
\begin{multline}
  \label{C_for_Tt}
  C_{n-1}(T_{(2,2t+1)}^{*} ; q)
  \\
  =
  -q^{n-t} \sum_{\ell=0}^n \frac{1}{(q)_{n-\ell} \, (q)_{n+\ell}} \,
  q^{(t+1)\ell^2-\ell} \, (1 - q^{2\ell}) 
  \sum_{k=-\ell}^{\ell-1}
  (-1)^k \, 
  q^{-(t+\frac{1}{2}) k^2 - (t - \frac{1}{2})k } .
\end{multline}
In the following proposition we give a $q$-hypergeometric expression for the coefficients $C_n(T_{(2,2t+1)}^{*} ; q)$.
We use the usual characteristic function
\begin{equation*}
  \chi(X) :=
  \begin{cases}
    1, & \text{when $X$ is true,}
    \\
    0, &\text{when $X$ is false.}
  \end{cases}
\end{equation*}

\begin{prop} \label{prop:C_multisum}
We have
\begin{multline}
  \label{C_multisum}
  -q^{t-n} \,
  C_{n-1}(T_{(2,2t+1)}^{*}; q)
  =
  \\ \sum_{n \geq n_{2t-1} \geq
  \cdots \geq n_1 \geq 0} \frac{q^{\sum_{i=1}^{t-1}n_{t+i}^2 +
    \binom{n_t}{2} - \sum_{i=1}^{t-1}n_in_{i+1} - \sum_{i=1}^{t-2}
    n_i}(-1)^{n_t}(1-q^{n_t-\chi(t \geq
    2)n_{t-1}})}{(q)_{n-n_{2t-1}}(q)_{n_{2t-1}-n_{2t-2}}\cdots
  (q)_{n_2-n_1}(q)_{n_1}}.
\end{multline}
\end{prop}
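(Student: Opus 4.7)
The plan is to apply the Bailey pair machinery of Section 2 in reverse. I first recognize the single-sum expression \eqref{C_for_Tt} as the $\beta$-side of a Bailey pair, and then produce the claimed multi-sum as the $\beta$-side of the \emph{same} pair built up by iterating the Bailey lemma on a simpler base pair.

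Rewrite \eqref{C_for_Tt} as
\[
-q^{t-n}\,C_{n-1}(T_{(2,2t+1)}^{*};q) \;=\; \sum_{\ell\ge 0}\frac{\alpha_\ell}{(q)_{n-\ell}(q)_{n+\ell}},\qquad \alpha_\ell := q^{(t+1)\ell^2-\ell}(1-q^{2\ell})\,S_\ell,
\]
where $S_\ell:=\sum_{k=-\ell}^{\ell-1}(-1)^k q^{-(t+1/2)k^2-(t-1/2)k}$. This identifies $(\alpha_\ell,\beta_n)$ with $\beta_n=-q^{t-n}C_{n-1}$ as a Bailey pair (formally relative to $a=1$; the $\ell=0$ term vanishes because of $1-q^{2\ell}$, compensating the singular normalization).

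Next, build the same pair by iterating the Bailey lemma in its $b,c\to\infty$ form \eqref{rho1rho2inftyalpha}--\eqref{rho1rho2inftybeta} a total of $t-1$ times on a base Bailey pair $(\alpha^{(0)}_\ell,\beta^{(0)}_m)$ relative to $a=1$. Each iteration introduces one outer index $n_{t+i}$, a numerator factor $q^{n_{t+i}^2}$, and a denominator factor $(q)_{n_{t+i+1}-n_{t+i}}$ (with $n_{2t}:=n$), while sending $\alpha^{(0)}_\ell\mapsto q^{(t-1)\ell^2}\alpha^{(0)}_\ell$. Matching with the identified $\alpha_\ell$ forces
\begin{align*}
\alpha^{(0)}_\ell &= q^{2\ell^2-\ell}(1-q^{2\ell})\,S_\ell,\\
\beta^{(0)}_{m} &= \sum_{m\ge n_t\ge\cdots\ge n_1\ge 0}\frac{q^{\binom{n_t}{2}-\sum_{i=1}^{t-1}n_i n_{i+1}-\sum_{i=1}^{t-2}n_i}(-1)^{n_t}(1-q^{n_t-\chi(t\ge 2)n_{t-1}})}{(q)_{m-n_t}(q)_{n_t-n_{t-1}}\cdots(q)_{n_2-n_1}(q)_{n_1}}.
\end{align*}
Substituting $\beta^{(0)}$ into the iterated $\beta$-formula then yields exactly the multi-sum of the proposition, provided $(\alpha^{(0)},\beta^{(0)})$ is indeed a Bailey pair.

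The remaining work is to verify the identity $\beta^{(0)}_m = \sum_{\ell\ge 0}\alpha^{(0)}_\ell/\bigl((q)_{m-\ell}(q)_{m+\ell}\bigr)$. I would proceed by induction on $t$. The base case $t=1$ collapses $\beta^{(0)}_m$ to $-\chi(m\ge 1)$ (as a short $q$-binomial computation confirms for small $m$), and the required Bailey relation is a truncated version of Euler's pentagonal number theorem, since $S_\ell^{(t=1)}$ is a partial pentagonal sum. For the inductive step I would peel off the innermost index $n_1$ via the $q$-binomial theorem and try to reduce the level-$t$ identity to the level-$(t-1)$ identity. The main obstacle is precisely this reduction: the factor $(1-q^{n_t-\chi(t\ge 2)n_{t-1}})$ sits \emph{inside} the sum rather than as a prefactor, and the cross terms $-n_i n_{i+1}$ place $(\alpha^{(0)},\beta^{(0)})$ outside Slater's classical list, so the reduction needs a careful substitution (with the $t=2$ boundary handled separately, where $\chi(t\ge 2)$ switches on). An alternative that sidesteps the induction is to exhibit an intermediate Bailey pair from Lovejoy's catalogue that already contains the cross-term shape, reducing the verification to a single Bailey transformation.
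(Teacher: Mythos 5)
Your setup is sound and matches the paper's starting point: equation \eqref{C_for_Tt} exhibits $\beta_n=-q^{t-n}C_{n-1}$ as the $\beta$-side of a Bailey pair relative to $1$ with $\alpha_\ell=q^{(t+1)\ell^2-\ell}(1-q^{2\ell})S_\ell$, and unwinding $t-1$ applications of \eqref{rho1rho2inftyalpha}--\eqref{rho1rho2inftybeta} correctly forces the base pair $(\alpha^{(0)}_\ell,\beta^{(0)}_m)$ you write down (your computation that the $t=1$ base collapses to $-\chi(m\geq 1)$ is also correct). But the proof has a genuine gap exactly where you flag it: everything reduces to verifying that $(\alpha^{(0)},\beta^{(0)})$ is a Bailey pair, and your proposed induction on $t$ does not go through as described. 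The level-$(t-1)$ base pair is a structurally different object (different quadratic form, different $\chi$-term, different $S_\ell$), so peeling off $n_1$ does not reduce the level-$t$ relation to the level-$(t-1)$ one; you would have to prove an entirely new family of identities, and you give no mechanism for handling the cross terms $-n_in_{i+1}$ or the internal factor $(1-q^{n_t-\chi(t\ge2)n_{t-1}})$. As it stands the crucial identity is asserted, not proved.

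Your parenthetical ``alternative'' is in fact how the paper closes this gap, and it is worth seeing how, because it is not a single catalogued pair. The paper takes part (ii) of Theorem~1.1 of~\cite{Lovej14a} with $k=K=t$, $\ell=t-1$, $m=0$, whose $\alpha$-side equals $-\alpha_n'+\alpha_n''$, where $\alpha_n'$ is the target and $\alpha_n''$ is a residual partial-theta-type term; it then builds a second Bailey pair with $\alpha$-side exactly $\alpha_n''$ by iterating \eqref{rho1rho2inftyalpha}--\eqref{rho1rho2inftybeta} $t$ times on the pair in equations~(3.9)--(3.10) of~\cite{Lovej14a} (with $k=t$ and $\ell=t-2$, or $\ell=0$ when $t=1$). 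Since Bailey pairs form a linear space, the difference $\beta_n''-\beta_n$ is the $\beta$-side corresponding to $\alpha_n'$, which is precisely \eqref{C_multisum}. So the multi-sum with its cross terms and internal $(1-q^{\cdots})$ factor arises as a \emph{difference of two} known Bailey pairs, not as one pair to be verified from scratch; without importing those two specific results (or an equivalent amount of work), your argument is incomplete.
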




\begin{proof}
In light of equations~\eqref{C_for_Tt} and~\eqref{Baileypairdef1},
we need to find $\beta_n'$ such that $(\alpha_n',\beta_n')$ form a Bailey pair relative to $1$, where 
\begin{equation}
\alpha_n' = q^{(t+1)n^2 -n}(1-q^{2n})\sum_{j=-n}^{n-1}(-1)^jq^{-((2t+1)j^2 + (2t-1)j)/2}.
\end{equation}
We require a result of the second author.  Namely, in part~(ii) of Theorem~1.1 of ~\cite{Lovej14a}, let $k=K=t$, $\ell = t-1$, and $m=0$.   Then $(\alpha_n,\beta_n)$ form a Bailey pair relative to~$1$, where
\begin{equation}
  \beta_n = \sum_{n \geq n_{2t-1} \geq \cdots \geq n_1 \geq 0}
  \frac{q^{\sum_{i=1}^{t-1}n_{t+i}^2+\binom{n_t+1}{2} - \sum_{i=1}^{t-1}n_in_{i+1} - \sum_{i=1}^{t-1}n_i}(-1)^{n_t}}{(q)_{n-n_{2t-1}}(q)_{n_{2t-1}-n_{2t-2}}\cdots (q)_{n_2-n_1}(q)_{n_1}}
\end{equation}
and
\begin{align}
\alpha_n &= q^{(t+1)n^2+n}\sum_{j=-n}^n(-1)^jq^{-((2t+1)j^2 + (2t-1)j)/2} \nonumber \\
&\qquad
- \chi(n \neq 0) q^{(t+1)n^2-n}\sum_{j=-n+1}^{n-1}(-1)^jq^{-((2t+1)j^2 + (2t-1)j)/2} \nonumber \\
&= -q^{(t+1)n^2 -n}(1-q^{2n})\sum_{j=-n}^{n-1}(-1)^jq^{-((2t+1)j^2 + (2t-1)j)/2} \label{alphan'} \\ 
& \qquad
+ \begin{cases} 
1, & \text{if $n=0$}, \\
(-1)^nq^{\frac{n^2}{2} + \left(\frac{2t-3}{2}\right)n} + (-1)^nq^{\frac{n^2}{2} - \left(\frac{2t-3}{2}\right)n}, &\text{if $n \geq 1$}.
\end{cases} \label{alphan''}
\end{align}
Let $\alpha_n = - \alpha_n' + \alpha_n''$ = \eqref{alphan'} +
\eqref{alphan''}.
To find $\beta_n''$, we start with equations~(3.9)
and~(3.10) of~\cite{Lovej14a}, which state that 
\begin{equation}
\alpha_n^* = 
\begin{cases}
1, & \text{if $n = 0$}, \\
(-1)^n\left(q^{(-(2k - 1)n^2 - (2 \ell +1)n)/2} + q^{(-(2k - 1)n^2 + (2 \ell +1)n)/2}\right), & \text{if $n > 0$},
\end{cases}
\end{equation}
and
\begin{equation}
\beta_n^* = \beta_{n_k} = (-1)^{n_k}q^{-\binom{n_k+1}{2}}\sum_{n_k
  \geq n_{k-1} \geq \cdots \geq n_1 \geq
  0}\frac{q^{-\sum_{i=1}^{k-1}n_in_{i+1} -
    \sum_{i=1}^{\ell}n_i}}{(q)_{n_k-n_{k-1}} \cdots
  (q)_{n_2-n_1}(q)_{n_1}} ,
\end{equation}
form a Bailey pair relative to $1$.
Using this Bailey pair with $k=t$ and
$$\ell = \begin{cases} 0,
  & \text{for   $t=1$,}
  \\
  t-2, &  \text{for $t \geq 2$,}
\end{cases}$$
we iterate equations~\eqref{rho1rho2inftyalpha} and~\eqref{rho1rho2inftybeta} $t$ times.  Then $\alpha_n^*$ becomes $\alpha_n''$ and
\begin{equation}
\beta_n'' = \sum_{n \geq n_{2t-1} \geq \cdots \geq n_1 \geq 0} \frac{q^{\sum_{i=1}^{t-1}n_{t+i}^2+\binom{n_t}{2} - \sum_{i=1}^{t-1}n_in_{i+1} - \sum_{i=1}^{t-2}n_i}(-1)^{n_t}}{(q)_{n-n_{2t-1}}(q)_{n_{2t-1}-n_{2t-2}}\cdots (q)_{n_2-n_1}(q)_{n_1}}.
\end{equation} 
Taking $\beta_n'' - \beta_n$ gives the expression for $-q^{t-n}C_{n-1}(T_{(2,2t+1)}^*)$.
\end{proof}


While the above proposition does furnish an attractive
$q$-hypergeometric expression for $C_n(T^*_{(2,2t+1)};q)$, it is not
apparent that these coefficients are Laurent polynomials in~$q$, as
guaranteed by~\eqref{C_nLaurent}.
This is made clear with the next
proposition.

\begin{prop}
  \label{prop:Ct_hypergeometric}
  We have
  \begin{align}
    C_n(T_{(2,2t+1)}^* ; q)
    &=
    q^{n+1-t}
    \sum_{
      n+1=k_t \geq
      k_{t-1} \geq \cdots \geq k_1 \geq 1}
     \prod_{a=1}^{t-1}
    q^{k_a^2} 
    \,
    \frac{
      (q^{1-a+
        \sum_{i=1}^a  2 k_i  }
      )_{k_{a+1}-k_a}
    }{(q)_{k_{a+1} - k_a}}
    \label{Cfunc_simple}
    \\
    &= q^{n+1-t}
    \sum_{
      n+1=k_t \geq
      k_{t-1} \geq \cdots \geq k_1 \geq 1} \prod_{i=1}^{t-1} q^{k_i^2}
    \begin{bmatrix}
      k_{i+1} + k_i - i   + 2\sum_{j=1}^{i-1}k_j \\
      k_{i+1} - k_i
    \end{bmatrix}_q .
    \label{Cfunc_simple2}
  \end{align}
\end{prop}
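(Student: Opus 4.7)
The statement gives two equivalent forms \eqref{Cfunc_simple} and \eqref{Cfunc_simple2}, so I first verify that they coincide. Using $(q^{1+A};q)_m = (q)_{A+m}/(q)_A$ with $A = 2\sum_{i=1}^a k_i - a$ and $m = k_{a+1}-k_a$, one computes $\frac{(q^{1-a+2\sum_{i=1}^a k_i})_{k_{a+1}-k_a}}{(q)_{k_{a+1}-k_a}} = \begin{bmatrix} k_{a+1}+k_a-a+2\sum_{i=1}^{a-1}k_i \\ k_{a+1}-k_a \end{bmatrix}_q$, so the two expressions agree.

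The substantive content is to derive this form from the $(2t-1)$-fold multisum of Proposition~\ref{prop:C_multisum}. I would work by summing out the lower indices $n_1, \ldots, n_{t-1}$, which appear bilinearly in the exponent (through $-\sum_{i=1}^{t-1} n_i n_{i+1} - \sum_{i=1}^{t-2}n_i$) and in $q$-Pochhammer denominators. Each such summation is a partial $q$-binomial sum, amenable to the finite $q$-binomial theorem $(z;q)_N = \sum_{j=0}^N \begin{bmatrix} N \\ j \end{bmatrix}_q (-z)^j q^{\binom{j}{2}}$ or its variants. The factor $(1-q^{n_t - \chi(t\geq 2) n_{t-1}})$ is essential for the Laurent-polynomial structure: the simplification $(1-q^{n_t-n_{t-1}})/(q)_{n_t-n_{t-1}} = 1/(q)_{n_t-n_{t-1}-1}$ for $n_t > n_{t-1}$ (with the term vanishing when $n_t = n_{t-1}$) initiates a cascade that, combined with the subsequent summations, ultimately produces the cumulative shift $2\sum_{j=1}^{a-1}k_j$ appearing in the upper index of each target $q$-binomial.

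An alternative route is via Bailey-pair uniqueness: one locates a Bailey pair relative to $1$ whose $\beta_n$ matches the simpler multisum, shows by Bailey iteration \eqref{alphaprimedef}-\eqref{betaprimedef} that its $\alpha_n$ coincides with the theta-like expression underlying the proof of Proposition~\ref{prop:C_multisum}, and invokes the uniqueness of \eqref{Baileypairdef1}. In either approach, the main obstacle is tracking the cumulative shifts $2\sum_{i<a}k_i$: the naive $b,c\to\infty$ Bailey chain \eqref{rho1rho2inftyalpha}-\eqref{rho1rho2inftybeta} at $a=1$ produces only the factor $q^{k^2}/(q)_{n-k}$ per iteration, so either a careful combinatorial bookkeeping of the multisum or a more general Bailey step with parameters $b, c$ chosen to carry the accumulating $q$-power is needed. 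I would verify the base case $t=1$ (where \eqref{Cfunc_simple2} collapses to $C_n(T^*_{(2,3)};q) = q^n$, matching \eqref{J_left_trefoil}) and the case $t=2$ (where the identity reduces to an instance of the Gauss $q$-binomial theorem) before tackling the general induction.
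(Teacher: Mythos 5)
Your opening step (the equivalence of \eqref{Cfunc_simple} and \eqref{Cfunc_simple2} via $(q^{k+1})_{n-k}/(q)_{n-k}=\begin{bmatrix} n\\ k\end{bmatrix}_q$) is exactly how the paper closes its proof, and your primary strategy --- iteratively evaluating the inner sums of the multisum in Proposition~\ref{prop:C_multisum} by variants of the finite $q$-binomial theorem --- is also the paper's strategy. But the proposal stops precisely where the proof begins: you write that ``the main obstacle is tracking the cumulative shifts $2\sum_{i<a}k_i$'' and offer only ``careful combinatorial bookkeeping'' or an unspecified more general Bailey step. That bookkeeping is the entire content of the proposition. The missing idea is the specific change of variables pairing the indices symmetric about $n_t$: after the $n_t$-sum is evaluated (your absorption $(1-q^{n_t-n_{t-1}})/(q)_{n_t-n_{t-1}}=1/(q)_{n_t-n_{t-1}-1}$ is fine for this and is equivalent to the paper's identity \eqref{q_sum_lemma_2}), one sets $n_{t+a}=n_{t-a}+k_a$ with $k_a\geq k_{a-1}$ and $k_1\geq 1$, and only then sums over $n_{t-a}$. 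Under this substitution the quadratic term $n_{t+a}^2=(n_{t-a}+k_a)^2$ contributes $2k_an_{t-a}$ linearly in $n_{t-a}$, which combines with the bilinear term $-n_{t-a-1}n_{t-a}$ and the residual binomial-exponent term left over from the previous step, so that the $n_{t-a}$-sum is exactly of the form $\sum_{n=a}^{b}q^{\binom{n}{2}}(-z)^n/\bigl((q)_{b-n}(q)_{n-a}\bigr)$ with $z=q^{2(k_1+\cdots+k_a)-(\cdots)}$; evaluating it by \eqref{q_sum_lemma_1} produces the factor $(q^{1-a+2\sum_{i\leq a}k_i})_{k_{a+1}-k_a}/(q)_{k_{a+1}-k_a}$ of \eqref{Cfunc_simple} directly and hands the next cumulative shift to the following step. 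Summing over ``the lower indices $n_1,\dots,n_{t-1}$'' alone, as you propose, does not close, because the upper-index quadratic terms $\sum n_{t+i}^2$ and the lower-index bilinear terms $\sum n_in_{i+1}$ cannot be decoupled without this pairing.

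Your alternative route via Bailey-pair uniqueness is likewise not viable as sketched: you correctly note that the $b,c\to\infty$ iteration \eqref{rho1rho2inftyalpha}--\eqref{rho1rho2inftybeta} cannot generate the shifts $2\sum_{i<a}k_i$, but you do not supply a replacement, so that branch carries the same gap. Your base-case checks ($t=1$ giving $C_n=q^n$, consistent with \eqref{J_left_trefoil}, and $t=2$ reducing to two nested applications of the $q$-binomial theorem) are correct and useful as sanity checks, but no induction on $t$ is actually set up, and the paper's argument is a direct iterated evaluation rather than an induction.
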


\begin{proof}
  We recall the classical $q$-binomial identity,
  \begin{equation}
    \sum_{n=0}^N z^nq^{\binom{n}{2}}\begin{bmatrix} N \\ n \end{bmatrix}_q = (z)_N.
  \end{equation}
  Letting $z= -zq^a$, $N = b-a$, and shifting $n$ to $ n-a$, we have the identity
  \begin{gather}
    \label{q_sum_lemma_1}
    \sum_{n=a}^b
    \frac{
      q^{\binom{n}{2}} \, (-z)^n
    }{
      (q)_{b-n} \, (q)_{n-a}
    }
    =
    (-z)^a \, q^{\binom{a}{2}} \,
    \frac{
      (z \, q^a)_{b-a}
    }{
      (q)_{b-a}
    } .
  \end{gather}
  Using this identity, we also have, for arbitrary $c$,
  \begin{equation}
    \label{q_sum_lemma_2}
    \sum_{n=a}^b
    \frac{
      (-1)^n \, (1-q^{n-c}) \, q^{\binom{n}{2} - a n}
    }{
      (q)_{b-n} \ (q)_{n-a}
    }
    =
		\begin{cases}
    (-1)^aq^{-\binom{a+1}{2}}(1-q^{a-c}), & \text{if $a=b$}, \\
		(-1)^{a+1} \, q^{-\binom{a}{2}-c}, & \text{otherwise}.
		\end{cases}
  \end{equation}

We may use the two identities~\eqref{q_sum_lemma_1}
and~\eqref{q_sum_lemma_2} to transform~\eqref{C_multisum}
into~\eqref{Cfunc_simple} as follows.
First, if $n_{t+1} = n_{t-1}$ then the sum in ~\eqref{C_multisum} vanishes, so we
may assume $n_{t+1} > n_{t-1}$.
The sum over $n_t$ is
\begin{equation*}
  \sum_{n_t=n_{t-1}}^{n_{t+1}}
  \frac{(-1)^{n_t}q^{\binom{n_t}{2}-n_{t-1}n_t}(1-q^{n_t-n_{t-1}})}{(q)_{n_{t+1}-n_t}(q)_{n_t-n_{t-1}}},
\end{equation*}
and the second part of identity~\eqref{q_sum_lemma_2} then enables us to evaluate this sum, giving 
\begin{multline}
  -q^{t-n-1}C_n(T_{(2,2t+1)}^*;q)
  =
  \\
  \sum_{n+1\geq n_{2t-1} \geq \cdots
    \geq n_{t+2} \geq n_{t+1}
    > n_{t-1} \geq n_{t-2} \geq \cdots \geq n_1 \geq 0}
  (-1)^{1+n_{t-1}}
  q^{
    \sum_{i=1}^{t-1} n_{t+i}^{~2} - \sum_{i=1}^{t-2} n_in_{i+1} - \sum_{i=1}^{t-1}n_i 
  }
  \\
  \times
  \frac{
    q^{-\binom{n_{t-1}}{2}
    }
  }{
    (q)_{n+1-n_{2t-1}} \cdots (q)_{n_{t+2}-n_{t+1}} 
    (q)_{n_{t-1}-n_{t-2}} \cdots (q)_{n_2-n_1} (q)_{n_1}
  } . \label{sum_nt_Ct}
\end{multline}
We then set $n_{t+1}=n_{t-1}+k_1$, with $k_1 \geq 1$.  The sum over $n_{t-1}$ is
\begin{equation*}
\sum_{n_{t-1}=n_{t-2}}^{n_{t+2}-k_1}
\frac{(-1)^{n_{t-1}}q^{\binom{n_{t-1}}{2}-n_{t-2}n_{t-1}+2k_1n_{t-1}}}{(q)_{n_{t+2}-k_1-n_{t-1}}(q)_{n_{t-1}-n_{t-2}}},
\end{equation*}
and~\eqref{q_sum_lemma_1} allows us to evaluate this sum, resulting in 
  \begin{multline*}
    q^{t-n-1}C_n(T_{(2,2t+1)}^*;q)
    =
    \\
    \sum_{n+1 \geq n_{2t-1} \geq \cdots
      \geq n_{t+2} \geq n_{t-2}+k_1
      > n_{t-2} \geq \cdots \geq n_1 \geq 0}
    (-1)^{n_{t-2}}
    q^{
      \sum_{i=2}^{t-1} n_{t+i}^{~2} - \sum_{i=1}^{t-3}n_in_{i+1} - \sum_{i=1}^{t-2} n_{i}
      }
    \\
    \times
    \frac{ (q^{2k_1})_{n_{t+2} - n_{t-2}-k_1}}{
      (q)_{n_{t+2}- n_{t-2} - k_1}
    } \cdot
    \frac{
      q^{-\binom{n_{t-2}}{2} - n_{t-2} + 2k_1 n_{t-2} +k_1^{~2}
      }
    }{
      (q)_{n+1-n_{2t-1}} \cdots (q)_{n_{t+3}-n_{t+2}} 
      (q)_{n_{t-2}-n_{t-3}} \cdots (q)_{n_2-n_1} (q)_{n_1}
    } .
  \end{multline*}
  We continue in the same manner, next setting $n_{t+2}=n_{t-2}+k_2$, with $k_2 \geq k_1$, and we may then take the sum over
  $n_{t-2}$ using~\eqref{q_sum_lemma_1}.  Iterating this process
  (taking the sum over $n_{t-a}$ after setting $n_{t+a}=n_{t-a}+k_a$), we
  arrive at~\eqref{Cfunc_simple}.  The expression in equation~\eqref{Cfunc_simple2} follows from the fact that
	\begin{equation}
	\begin{bmatrix} n \\ k \end{bmatrix}_q = \frac{(q^{k+1})_{n-k}}{(q)_{n-k}}.
	\end{equation}
\end{proof}




We are now prepared to prove the duality in Theorem \ref{thm:Ft_and_Ut}.

\begin{proof}[Proof of Theorem~\ref{thm:Ft_and_Ut}]
Comparing equations~\eqref{Cfunc_simple} and~\eqref{define_Ut}, we see that 
\begin{equation} \label{define_Ut2}
  U_t (x;q)
  =
  \sum_{n=0}^\infty C_n(T_{(2,2t+1)}^{*}; q) \, 
  (-x \,q)_n 
  (-x^{-1} \,q)_n. 
	\end{equation}
	Therefore, by~\eqref{J_and_C}, $U_t(x;q)$ gives the colored Jones polynomial for $T_{(2,2t+1)}^*$ when $x = q^N$,
	\begin{equation}
    \label{J_and_Ut}
    J_N(T_{(2,2t+1)}^* ; q)
    =
    U_t (-q^{N};q) .
  \end{equation}
	Combining~\eqref{J_and_Ut} with~\eqref{J_and_Ft_unity} and \eqref{mirror_J} gives the statement of the theorem.
\end{proof}

\section{Hecke-Type Formulae}
In this section we prove Theorem~\ref{thm:HeckeUt} as well as a simpler formula when $t=1$.  

\begin{proof}[Proof of Theorem~\ref{thm:HeckeUt}]
In equations~\eqref{rho1rho2inftyalpha} and~\eqref{rho1rho2inftybeta} we let $a=1$, $b=1/c = x$, and apply $\alpha_n'$ and $\beta_n' = -q^{t-n}C_{n-1}$ from the proof of Proposition~\ref{prop:C_multisum}.
Recalling~\eqref{define_Ut2}, we obtain
\begin{align*}
  U_t(-x;q) &= \frac{-q^{-t}(xq)_{\infty}(q/x)_{\infty}}{(q)_{\infty}^2}
  \sum_{n \geq 1} \sum_{k=-n}^{n-1} \frac{(-1)^k q^{(t+1)n^2-(t+\frac{1}{2}) k^2 - (t- \frac{1}{2})k}(1-q^{2n})}{(1-xq^n)(1-q^n/x)} \\
  &= \frac{-q^{-t}(xq)_{\infty}(q/x)_{\infty}}{(q)_{\infty}^2}
  \sum_{n \geq 1} \sum_{k=-n}^{n-1} \frac{(-1)^k
    q^{(t+1)n^2-(t+\frac{1}{2}) k^2 - (t- \frac{1}{2})k}}{1-xq^n}
  \\
  &\hskip1in + \frac{-q^{-t}(xq)_{\infty}(q/x)_{\infty}}{(q)_{\infty}^2}
  \sum_{n \geq 1} \sum_{k=-n}^{n-1} \frac{(-1)^k
    q^{(t+1)n^2+n-(t+\frac{1}{2}) k^2 - (t-  \frac{1}{2})k}}{x(1-q^n/x)} \\
  &= \frac{-q^{-t}(xq)_{\infty}(q/x)_{\infty}}{(q)_{\infty}^2}
  \sum_{n \geq 1} \sum_{k=-n}^{n-1} \frac{(-1)^k q^{(t+1)n^2-(t+\frac{1}{2}) k^2 - (t- \frac{1}{2})k}}{1-xq^n} \\ 
  &\hskip1in -  \frac{-q^{-t}(xq)_{\infty}(q/x)_{\infty}}{(q)_{\infty}^2}
  \sum_{n \leq 1} \sum_{k=n}^{-n-1} \frac{(-1)^k q^{(t+1)n^2-(t+\frac{1}{2}) k^2 - (t- \frac{1}{2})k}}{1-xq^n}. 
\end{align*}
Letting $n=(r+s+1)/2$ and $k=(r-s-1)/2$ in each of the two final sums
leads to the expression in~\eqref{HeckeUteq1}.   The expression
in~\eqref{HeckeUteq2} follows upon expanding the term
$1/(1-xq^{(r+s+1)/2})$ in~\eqref{HeckeUteq1} as a geometric series.   
\end{proof}



When $t=1$, we have the following Hecke-type double sum. 
\begin{theorem}
\begin{equation}
  (1-x) \, U_1 (-x; q) 
  =
  \frac{1}{(q)_{\infty}}\left(\sum_{r,n \geq 0} - \sum_{r,n < 0}\right)(-1)^{n+r}x^{-r}q^{n(3n+5)/2+2nr+r(r+3)/2}.
\end{equation}
\end{theorem}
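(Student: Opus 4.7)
The plan is to specialize the proof of Theorem~\ref{thm:HeckeUt} to $t=1$ and then collapse the resulting expression using Jacobi's triple product identity. The $t=1$ case is special because the underlying Bailey pair has $\beta_n = 1$ relative to $q^2$ (corresponding to $C_n(T^*_{(2,3)};q)=q^n$), which permits more aggressive simplification than in the general case.

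First I would extract the intermediate formula from the proof of Theorem~\ref{thm:HeckeUt} at $t=1$, namely
\[
U_1(-x;q)
= -\frac{(xq)_\infty(q/x)_\infty}{q\,(q)_\infty^2}\sum_{n\geq 1}\frac{(1-q^{2n})\,q^{2n^2}}{(1-xq^n)(1-q^n/x)}\sum_{k=-n}^{n-1}(-1)^k q^{-\tfrac{3}{2}k(k+1)+k}.
\]
Multiplying through by $(1-x)$ turns $(1-x)(xq)_\infty$ into $(x)_\infty$, so the prefactor becomes $(x)_\infty(q/x)_\infty/(q)_\infty^2$. I would then apply the bilateral expansion
\[
\frac{1-q^{2n}}{(1-xq^n)(1-q^n/x)} \;=\; \sum_{u\in\mathbb{Z}} x^u q^{|u|n},
\]
(proved by partial fractions plus two geometric series) to convert the rational function, and Jacobi's triple product
\[
(x)_\infty(q/x)_\infty \;=\; \frac{1}{(q)_\infty}\sum_{m\in\mathbb{Z}}(-1)^m x^m q^{m(m-1)/2}
\]
to absorb the remaining theta factor on the left.

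The last step is to interchange the four summations (over $m,u,n,k$) and reindex so that the expression collapses to the announced double Hecke sum. Writing $r=-(m+u)$ unifies the $x$-indices into $x^{-r}$, and the involution $(n,k)\mapsto(-n,-k-1)$, which fixes the exponent $2n^2-\tfrac{3}{2}k(k+1)+k$ but interchanges the two summation octants, pairs contributions and produces the $+/-$ pattern in the statement. Two of the three factors of $(q)_\infty^{-1}$ introduced along the way must then cancel against a hidden Jacobi-type evaluation of the internal $(n,k)$-sum, leaving only $1/(q)_\infty$.

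The main obstacle is precisely this last reindexing/cancellation: verifying that, after all manipulations, the residual quadratic form in $(n,r)$ really is $\tfrac12 n(3n+5)+2nr+\tfrac12 r(r+3)$ with sign $(-1)^{n+r}$ and the correct octant decomposition, while simultaneously executing the two theta-type cancellations that reduce $(q)_\infty^{-3}$ down to $(q)_\infty^{-1}$. The sign bookkeeping through the involution and the successive Jacobi triple product applications is where one has to be most careful.
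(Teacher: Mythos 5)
Your proposal diverges from the paper's proof, and the step you yourself flag as the ``main obstacle'' is a genuine gap, not a technicality. After you expand $(1-q^{2n})/((1-xq^n)(1-q^n/x))$ as $\sum_{u\in\mathbb{Z}}x^uq^{|u|n}$ and apply the triple product to $(x)_\infty(q/x)_\infty$, you are left with a quadruple sum over $(m,u,n,k)$ carrying the prefactor $(q)_\infty^{-3}$, and you need two of the four summations to collapse. You attribute this to ``a hidden Jacobi-type evaluation of the internal $(n,k)$-sum,'' but that sum runs over the indefinite quadratic form $2n^2-\tfrac{3}{2}k^2$ restricted to the region $|k|\le n$; such Hecke-type indefinite theta sums are precisely the objects that do \emph{not} factor as infinite products (they are false/mock theta type), so there is no triple-product evaluation available to cancel $(q)_\infty^{-2}$. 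Moreover, the involution $(n,k)\mapsto(-n,-k-1)$ does not fix the exponent as claimed: $-\tfrac{3}{2}k^2-\tfrac{1}{2}k$ goes to $-\tfrac{3}{2}k^2-\tfrac{5}{2}k-1$, so your pairing argument for the $\pm$ octant structure also fails as written. In short, the reduction from $(q)_\infty^{-3}$ to $(q)_\infty^{-1}$ and the collapse to a double sum are asserted, not proved, and the asserted mechanisms do not exist.

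The paper proceeds quite differently and much more directly. Since $C_n(T^*_{(2,3)};q)=q^n$, one has $(1-x)U_1(-x;q)=\sum_{n\ge 0}(x)_{n+1}(q/x)_nq^n$. The paper then takes the Bailey pair relative to $q$ from Lemma~6 of~\cite{Andre92a}, $\alpha_n=(-x)^{-n}q^{\binom{n+1}{2}}(1-x^{2n+1})$, $\beta_n=(x)_{n+1}(q/x)_n/(q^2)_{2n}$, and feeds it into the conjugate Bailey pair identity~\eqref{conjBailey} with $a=q$; the left side becomes exactly $\sum_{n\ge0}(x)_{n+1}(q/x)_nq^n$ and the right side is already the desired double sum once $(1-x^{2r+1})$ is split and $(r,n)\mapsto(-r-1,-n-1)$ is applied to the second piece. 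If you want to keep your own route, you would need to replace the heuristic collapse with an actual identity; otherwise the argument does not go through.
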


\begin{proof}
To see this we use the Bailey pair relative to $q$ (Lemma~6 of~\cite{Andre92a}),
\begin{equation*}
\alpha_n = (-x)^{-n}q^{\binom{n+1}{2}}(1-x^{2n+1})
\end{equation*}
and
\begin{equation*}
\beta_n = \frac{(x)_{n+1}(q/x)_n}{(q^2)_{2n}}
\end{equation*}
together with the fact that if $(\alpha_n,\beta_n)$ is a Bailey pair
relative to $a$, then
(Cor.~1.3 of~\cite{Lovej12a})
\begin{equation} \label{conjBailey}
\sum_{n \geq 0} (aq)_{2n}q^n\beta_n = \frac{1}{(q)_{\infty}} \sum_{r,n \geq 0} (-a)^nq^{3n(n+1)/2+(2n+1)r}\alpha_r.
\end{equation}
This gives
\begin{equation}
\sum_{n \geq 0} (x)_{n+1}(q/x)_nq^n = \frac{1}{(q)_{\infty}}\sum_{r,n \geq 0}(-1)^{n+r}x^{-r}q^{n(3n+5)/2+2nr+r(r+3)/2}(1-x^{2r+1}).
\end{equation}
Using $(1-x^{2r+1})$ to split the right-hand side into two sums and then replacing $(r,n)$ by $(-r-1,-n-1)$ in the second sum yields the result.  
\end{proof}

\section{The vector-valued case}




The first author~\cite{KHikami02c} introduced and studied the modular properties of a family of $q$-series $F_t^{(m)}(q)$, defined for
$1\leq m \leq t$ by
\begin{equation}
  \label{eq:3}
  F_t^{(m)}(  q)
  :=
  q^t
  \sum_{k_1, \dots, k_t=0}^\infty
  (q)_{k_t} \,
  q^{k_1^{~2} + \dots + k_{t-1}^{~2} + k_m + \dots + k_{t-1}} \,
  \prod_{i=1}^{t-1}
  \begin{bmatrix}
    k_{i+1} + \delta_{i,m-1} \\
    k_i
  \end{bmatrix}_q .
\end{equation}
The case $m=1$ corresponds to~\eqref{define_Ft},
$F_t(q)=F_t^{(1)}(q)$.   

He showed~\cite{KHikami02c} that at the $N$-th root of unity we have
\begin{equation}
  \label{Ftm_N}
  \zeta_N^{~
    -t + \frac{(2t+1-2m)^2}{8(2t+1)}
  } \,
  F_t^{(m)}(\zeta_N)
  =
  (2t+1) \, N
  \sum_{k=1}^{4(2t+1)N}
  \chi_{8t+4}^{(m)}(k) \,
  \zeta_N^{~ \frac{k^2}{8 (2t+1)} } \,
  B_2
  \left( \frac{k}{4(2t+1)N} \right) ,
\end{equation}
where the Bernoulli polynomial is
$B_2(x)=x^2-x+\frac{1}{6}$, and the periodic function is
\begin{equation}
  \label{define_periodic}
  \chi_{8t+4}^{(m)}(k)
  :=
  \begin{cases}
    1, &
    \text{when $k= \pm(2t+1-2m) \mod 8t+4$},
    \\
    -1, &
    \text{when $k=\pm(2t+1+2m) \mod 8t+4$},
    \\
    0 , &
    \text{otherwise}.
  \end{cases}
\end{equation}
This is a limiting value of the Eichler integral of
a vector modular
form $\Phi_t^{(m)}(\tau)$ with weight~$1/2$,
\begin{equation}
  \begin{aligned}[t]
    \Phi_t^{(m)}(\tau)
    & :=
    \sum_{n=0}^\infty \chi_{8t+4}^{(m)}(n) \,
    q^{\frac{1}{8(2t+1)} n^2}
    \\
    &
    =
    q^{\frac{(2t+1-2m)^2}{8(2t+1)}}
    (q^m, q^{2t+1-m}, q^{2t+1}; q^{2t+1})_\infty
    ,
  \end{aligned}
\end{equation}
where as usual $q=\E^{2\pi\I \tau}$,
and
$(a,b,\cdots; q)_\infty = (a)_\infty (b)_\infty \cdots$.
The quantum modularity of $F_t^{(m)}(q)$ is given by
\begin{multline}
  \label{modular_Ftm}
  \phi_t^{(m)}(z) + \frac{1}{\left( \I \, z \right)^{\frac{3}{2}}}
  \sum_{m^\prime=1}^t
  \frac{2}{\sqrt{2t+1}}
  (-1)^{t+1+m+m^\prime}
  \sin\left( \frac{2 \, m \, m^\prime}{2 t+1} \,  \pi \right)
  \,
  \phi_t^{(m^\prime)} \left(-1/z \right)
  \\
  =
  \frac{\sqrt{(2t+1)\I}}{2\pi}
  \int_0^{\I \infty}
  \frac{
    \Phi_t^{(m)}(w)
  }{
    (w-z)^{\frac{3}{2}}
  } \, dw ,
\end{multline}
where
$z\in \mathbb{Q}$, and
$\phi_t^{(m)}(\tau) := q^{ -t + \frac{(2t+1-2m)^2}{8(2t+1)} } \,
F_t^{(m)}(q)$.
See~\cite{KHikami02c} for details.




In this section we define $q$-series $U_t^{(m)}(q)$ so that 
$F_t^{(m)}(    \zeta_N^{~-1}) = U_t^{(m)}(    -1;     \zeta_N)$
(see Theorem~\ref{thm:Ftm_and_Utm}) and we find Hecke-type formulae for
$U_t^{(m)}(q)$ (see Theorem~\ref{thm:HeckeUtm}).
We begin by defining an analogue of the colored Jones polynomial,
\begin{equation}
  \label{Jpoly_tm}
  (1-q^N) \,
  J_N^{(t,m)}(q)
  :=
  (-1)^N \,
  q^{-t + \frac{N}{2} + \frac{2t+1}{2} N^2}
  \sum_{k=-N}^{N-1}
  (-1)^k \,
  q^{- \frac{2t+1}{2} k(k+1) + m k} .
\end{equation}
When $m=1$ this coincides with
the colored Jones polynomial
$J_N^{(t,1)}(q)=J_N(T_{(2,2t+1)}^*;q)$.

\begin{prop}
    \begin{equation}
      \label{Jtm_F_N}
    J_N^{(t,m)}(\zeta_N)
    =
    F_t^{(m)}(\zeta_N^{~ -1}) .
  \end{equation}
\end{prop}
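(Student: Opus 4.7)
The plan is to generalize the $m=1$ case of the identity, which is a direct consequence of the mirror symmetry \eqref{mirror_J}, equation \eqref{left_hand_J}, and \eqref{J_and_Ft_unity}, once one observes that $J_N^{(t,1)}(q)$ coincides with $J_N(T_{(2,2t+1)}^*;q)$. For general $m$ no such knot-theoretic $q$-hypergeometric identity is available, so the strategy will be to match both sides to the common Bernoulli-polynomial Gauss sum already appearing in \eqref{Ftm_N}.

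The first step is to evaluate $J_N^{(t,m)}(\zeta_N)$ via a limit in the defining relation \eqref{Jpoly_tm}. Since $(1-q^N)$ has a simple zero at $\zeta_N$, the Gauss sum $S(q):=\sum_{k=-N}^{N-1}(-1)^k q^{-(2t+1)k(k+1)/2+mk}$ must vanish there. I would verify this by the pairing $k\leftrightarrow k-N$: using $q^N=1$ together with the parity of $2t+1$ and of $N$, the paired summands turn out to be exact negatives of one another. L'H\^opital then yields $J_N^{(t,m)}(\zeta_N)=-\zeta_N\, P(\zeta_N)\, S'(\zeta_N)/N$, where $P(q):=(-1)^N q^{-t+N/2+(2t+1)N^2/2}$ is the explicit prefactor on the right of \eqref{Jpoly_tm}.

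Next, I would complete the square in the exponent of $S(q)$ via the substitution $j=2(2t+1)k+(2t+1-2m)$. The exponent becomes $(2t+1-2m)^2/(8(2t+1))-j^2/(8(2t+1))$, and a short residue check shows that even $k$ gives $j\equiv 2t+1-2m$ while odd $k$ gives $j\equiv -(2t+1+2m)\pmod{8t+4}$, so that $(-1)^k$ matches the character $\chi^{(m)}_{8t+4}(j)$ of \eqref{define_periodic}. After the substitution, $S'(\zeta_N)$ reduces to $\zeta_N^{-1+(2t+1-2m)^2/(8(2t+1))}$ times a sum of the form $-(8(2t+1))^{-1}\sum_j \chi^{(m)}_{8t+4}(j)\, j^2\, \zeta_N^{-j^2/(8(2t+1))}$ (the constant piece of the quadratic drops because $S(\zeta_N)=0$). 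Reorganizing this by residues modulo $4(2t+1)N$ and combining with the $1/N$ from L'H\^opital produces the Bernoulli polynomial $B_2(j/(4(2t+1)N))$ structure of \eqref{Ftm_N}; this is the same manipulation used to derive \eqref{Ftm_N} in \cite{KHikami02c}.

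Finally, I would invoke \eqref{Ftm_N} with $\zeta_N$ replaced by $\zeta_N^{-1}$ (still a primitive $N$-th root of unity) to express $F_t^{(m)}(\zeta_N^{-1})$ as the complex-conjugate Bernoulli-polynomial Gauss sum, and match it term-by-term with the expression obtained above for $J_N^{(t,m)}(\zeta_N)$; reconciling the prefactors $P(\zeta_N)$ and $\zeta_N^{-t+(2t+1-2m)^2/(8(2t+1))}$ closes the argument. The main obstacle is the third step: the careful bookkeeping of signs, residue classes, and the identification of $(-1)^k$ with $\chi^{(m)}_{8t+4}$ under the square-completing substitution. Fortunately the underlying combinatorics is exactly that already carried out in \cite{KHikami02c} on the $F_t^{(m)}$ side, so the burden is to transport it cleanly to the knot-invariant side.
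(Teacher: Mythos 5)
Your proposal is correct and follows essentially the same route as the paper: rewrite the Gauss sum in terms of the character $\chi_{8t+4}^{(m)}$ by completing the square, evaluate the limit at $q\to\zeta_N$ via the derivative (L'H\^opital), reorganize the resulting quadratic-weighted character sum into the Bernoulli-polynomial form, and compare with \eqref{Ftm_N}. The only difference is cosmetic: you make explicit the vanishing of the Gauss sum at $\zeta_N$, which the paper leaves implicit.
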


\begin{proof}
The function $J_N^{(t,m)}(q)$ is also written by use
of the periodic function~\eqref{define_periodic} as
\begin{equation*}
  (1-q^N) \, J_N^{(t,m)}(q)
  =
  (-1)^N    \,
  q^{-t+\frac{1}{2}N + \frac{2t+1}{2}N^2}
  \sum_{k=1}^{2(2t+1)N} 
  \chi_{8t+4}^{(m)}(k) \, q^{- \frac{k^2 - (2t+1-2m)^2}{8(2t+1)}} .
\end{equation*}
At $q\to \zeta_N$,
we have
\begin{align*}
  \zeta_N^{~
    t -\frac{(2t+1-2m)^2}{8(2t+1)}
  } \,
  J_N^{(t,m)}(\zeta_N)
  & =
  - \lim_{q\to \zeta_N} \frac{1}{1-q^N}
  \sum_{k=1}^{2(2t+1)N} \chi_{8t+4}^{(m)}(k) \, q^{- \frac{k^2}{8(2t+1)}}
  \\
  & =
  -\frac{1}{8 (2t+1) N}
  \sum_{k=1}^{2(2t+1)N} k^2 \, \chi_{8t+4}^{(m)}(k) \,
  \zeta_N^{~ -\frac{k^2}{8(2t+1)}} .
\end{align*}
Using~\eqref{define_periodic},
the sum $\sum_{k=0}^{2(2t+1)N}$ may be replaced with
$\frac{1}{2}\sum_{k=-2(2t+1)N}^{2(2t+1)N}$.  Then the right hand side is written as
\begin{equation*}
  -\frac{1}{16(2t+1)N}
  \sum_{\ell=0}^{4(2t+1)N} 
  \left(\ell - 2(2t+1)N\right)^2 \,
  \chi_{8t+4}^{(m)}(\ell - 2(2t+1)N) \,
  \zeta_N^{~ - \frac{\left( \ell - 2 (2t+1) N \right)^2}{8(2t+1)}} .
\end{equation*}
Since
$\chi_{8t+4}^{(m)}(k-2(2t+1)) = - \chi_{8t+4}^{(m)}(k)$,
we obtain
\begin{equation}
  \zeta_N^{~
    t -\frac{(2t+1-2m)^2}{8(2t+1)}
  } \,
  J_N^{(t,m)}(\zeta_N)
  =
  (2t+1) \, N
  \sum_{k=1}^{4(2t+1)N}
  \chi_{8t+4}^{(m)}(k) \,
  \zeta_N^{~- \frac{k^2}{8 (2t+1)} } \,
  B_2
  \left( \frac{k}{4(2t+1)N} \right) .
\end{equation}
Recalling~\eqref{Ftm_N}, the statement follows.
\end{proof}

Next we study a cyclotomic expansion for $J_N^{(t,m)}(q)$,
\begin{equation}
  \label{Jm_C}
  J_N^{(t,m)}(q)
  =
  \sum_{n=0}^\infty C_n^{(t,m)}(q) \, (q^{1+N})_n \, (q^{1-N})_n.
\end{equation}
Equation~\eqref{C_from_J} shows that we have
\begin{equation} \label{Ctm}
  C_{n-1}^{(t,m)}( q    )
  = - q^{n-t} \sum_{\ell=0}^n \frac{1}{(q)_{n-\ell} (q)_{n+\ell}} \,
  q^{(t+1)\ell^2 - \ell} \, ( 1 - q^{2\ell}) \,
  \sum_{k=-\ell}^{\ell-1}
  (-1)^k q^{ - \frac{2t+1}{2} k(k+1) + m k} .
\end{equation}
We note that
$    C_n^{(t,1)}(q) = C_n(T_{(2,2t+1)}^* ; q)$.

The next two propositions are generalizations of Propositions~\ref{prop:C_multisum} and~\ref{prop:Ct_hypergeometric}, respectively.
\begin{prop} \label{prop:Ctm_multisum}
  We have
\begin{equation} \label{Ctm_multisum}
  -q^{t-n}C_{n-1}^{(t,m)} 
  = \sum_{n \geq n_{2t-1} \geq \cdots \geq n_1  \geq 0} 
  \frac{q^{\sum_{i=1}^{t-1}n_{t+i}^2 + \binom{n_t}{2} -   \sum_{i=1}^{t-1}n_in_{i+1} - \sum_{i=1}^{t-m-1}  n_i}(-1)^{n_t}(1-q^{n_t - \chi(t > m)n_{t-m}}) 
  }{
    (q)_{n-n_{2t-1}}\cdots(q)_{n_2-n_1}(q)_{n_1}} .
\end{equation}
\end{prop}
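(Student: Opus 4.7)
The plan is to mirror the proof of Proposition~\ref{prop:C_multisum} exactly, threading the extra parameter $m$ through in place of the implicit value $m = 1$ used there. In view of~\eqref{Ctm} and~\eqref{Baileypairdef1}, I first need to exhibit a Bailey pair $(\alpha'_n, \beta'_n)$ relative to $1$ whose $\alpha$-side is
\begin{equation*}
\alpha'_n = q^{(t+1)n^2 - n}(1 - q^{2n}) \sum_{j = -n}^{n-1} (-1)^j q^{-(2t+1)j(j+1)/2 + m j},
\end{equation*}
since~\eqref{Baileypairdef1} will then identify $\beta'_n$ with $-q^{t - n} C^{(t,m)}_{n-1}(q)$. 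I would construct this pair as a difference $\beta'_n = \beta''_n - \beta_n$ of $\beta$-sides coming from two auxiliary Bailey pairs, following the same template as before.

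For the main piece, I would apply Theorem~1.1(ii) of~\cite{Lovej14a} with $k = K = t$ and $\ell = t - m$ (the proof of Proposition~\ref{prop:C_multisum} used $\ell = t - 1$, which is exactly the $m = 1$ specialization of this choice). The $\ell$-parameter controls the linear-in-$j$ shift inside the alternating inner sum on the $\alpha$-side, and the choice $\ell = t - m$ is precisely what produces the $m k$ summand visible in~\eqref{Ctm}. The resulting $\alpha$ should split as $-\alpha'_n$ plus a boundary term $\alpha''_n$ carried by the $j = \pm n$ endpoints, while the corresponding $\beta$ is the $(2t - 1)$-fold sum with exponent $\sum_{i=1}^{t-1} n_{t+i}^2 + \binom{n_t + 1}{2} - \sum_{i=1}^{t-1} n_i n_{i+1} - \sum_{i=1}^{t - m} n_i$.

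For the auxiliary piece, I would take the Bailey pair from equations~(3.9)--(3.10) of~\cite{Lovej14a} with $k = t$ and $\ell = t - m - 1$ when $t > m$ (and $\ell = 0$ when $t = m$), then iterate the limiting Bailey lemma~\eqref{rho1rho2inftyalpha}--\eqref{rho1rho2inftybeta} a total of $t$ times. The output $(\alpha''_n, \beta''_n)$ should have $\beta''_n$ matching $\beta_n$ except with $\binom{n_t}{2}$ in place of $\binom{n_t + 1}{2}$ and $-\sum_{i=1}^{t - m - 1} n_i$ in place of $-\sum_{i=1}^{t - m} n_i$. Subtracting, the exponent discrepancy collapses into precisely the factor $(1 - q^{n_t - \chi(t > m) n_{t - m}})$ seen in~\eqref{Ctm_multisum}, with the characteristic function $\chi(t > m)$ tracking whether the auxiliary $\ell$ is nontrivial.

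The main obstacle will be verifying that the boundary remainder $\alpha''_n$ coming out of Theorem~1.1(ii) at $\ell = t - m$ matches exactly what is produced by iterating (3.9)--(3.10) at $\ell = t - m - 1$; in the $m = 1$ case this materializes as the clean pair of terms $(-1)^n q^{n^2/2 \pm (2t - 3) n / 2}$, and I would need to check the analogous identification $(-1)^n q^{n^2/2 \pm (2t - 2m - 1) n / 2}$ for general $m$. Once this compatibility is in hand, the rest is formal: the same cancellations that worked at $m = 1$ go through, and the claimed identity for $C^{(t,m)}_{n-1}(q)$ follows immediately from~\eqref{Baileypairdef1} applied to $(\alpha'_n, \beta'_n)$.
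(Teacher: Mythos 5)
Your proposal is correct and follows essentially the same route as the paper: the paper's own proof is just the one-line remark that the argument of Proposition~\ref{prop:C_multisum} goes through verbatim with $\ell = t-m$ in place of $\ell = t-1$, and your choices ($\ell = t-m$ in Theorem~1.1(ii) of~\cite{Lovej14a} for the main pair, $\ell = t-m-1$ for $t>m$ and $\ell=0$ for $t=m$ in equations~(3.9)--(3.10) for the auxiliary pair, followed by $t$ iterations of~\eqref{rho1rho2inftyalpha}--\eqref{rho1rho2inftybeta} and the subtraction $\beta_n''-\beta_n$) are exactly the correct specializations. Your predicted boundary terms $(-1)^n q^{n^2/2 \pm (2t-2m-1)n/2}$ and the resulting factor $(1-q^{n_t-\chi(t>m)n_{t-m}})$ also check out.
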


\begin{proof}
In light of equation~\eqref{Ctm} and the definition of a Bailey pair~\eqref{Baileypairdef1}, we need to find $\beta_n'$ such that
\begin{equation}
  \alpha_n' = q^{(t+1)n^2-n}(1-q^{2n})\sum_{k=-n}^{n-1}(-1)^kq^{-(2t+1)k^2/2 - (2t-(2m-1))k/2}.
\end{equation}
The proof of this Bailey pair is exactly as in the proof of Proposition~\ref{prop:C_multisum} but with $\ell = t-m$ instead of $t-1$.
\end{proof}

\begin{prop} We have
  \begin{align}
    C_n^{(t,m)}(q) &=
    q^{n+1-t} \,
    \sum_{\substack{n+1 = k_t \geq k_{t-1}\geq \dots\geq k_1 \geq 0 \\ k_m \geq 1}}
    \prod_{a=1}^{t-1}
    q^{k_a^2}
    \,   \frac{
      (q^{1-a+
        \sum_{i=1}^a \left( 2 k_i + \chi(m>i ) \right) }
      )_{k_{a+1}-k_a}
    }{(q)_{k_{a+1} - k_a}}  \label{def_vector_C} \\
		&= q^{n+1-t} \,   \sum_{\substack{n+1 = k_t \geq k_{t-1}\geq \dots\geq k_1 \geq 0 \\ k_m \geq 1}} \prod_{i=1}^{t-1}
    q^{k_i^{2}} \,
    \begin{bmatrix}
      \displaystyle
      k_{i+1} - k_i - i 
      + \sum_{j=1}^{i} \left( 2 k_j + \chi(m>j) \right)
      \\
      k_{i+1} - k_i
    \end{bmatrix}_q . \label{def2_vector_C}
  \end{align}
\end{prop}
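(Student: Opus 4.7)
The plan is to follow the proof of Proposition~\ref{prop:Ct_hypergeometric}, iteratively applying the two $q$-binomial identities \eqref{q_sum_lemma_1} and \eqref{q_sum_lemma_2} to the multisum \eqref{Ctm_multisum} from Proposition~\ref{prop:Ctm_multisum}.

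First, I would evaluate the sum over $n_t$ by means of identity \eqref{q_sum_lemma_2}, now with $a = n_{t-1}$, $b = n_{t+1}$, and $c = n_{t-m}$ (instead of $c = n_{t-1}$ as in the $m = 1$ case). In that earlier proof, the sub-sum corresponding to $a = b$ vanished automatically; for $m > 1$ it no longer does, and I expect the boundary contribution to be absorbed by allowing the subsequently introduced variables $k_1, \ldots, k_{m-1}$ to start at $0$, with the strict constraint $k_m \geq 1$ recording the non-vanishing of the factor $(1 - q^{n_t - n_{t-m}})$.

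Next, I would set $n_{t+i} = n_{t-i} + k_i$ for $i = 1, \ldots, t-1$ and iteratively sum over $n_{t-1}, n_{t-2}, \ldots, n_1$ via \eqref{q_sum_lemma_1}; each step produces one factor of the shape $\frac{(q^{1-i + 2k_i + \cdots})_{k_{i+1}-k_i}}{(q)_{k_{i+1}-k_i}}$ that appears in \eqref{def_vector_C}. The key new feature compared with Proposition~\ref{prop:Ct_hypergeometric} is the linear term $q^{-n_{t-m}}$ left behind by the first summation; its propagation through the later iterations is what should manufacture the extra $\chi(m > i)$ contributions inside the argument of the $q$-Pochhammer. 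Finally, the passage from \eqref{def_vector_C} to \eqref{def2_vector_C} is immediate from the identity $\begin{bmatrix} n \\ k \end{bmatrix}_q = \frac{(q^{k+1})_{n-k}}{(q)_{n-k}}$, exactly as in Proposition~\ref{prop:Ct_hypergeometric}.

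The main obstacle will be the bookkeeping around the boundary term from the $a = b$ case of \eqref{q_sum_lemma_2}: I need to show it combines cleanly with the main case so that together they account precisely for the constraint $k_m \geq 1$ in \eqref{def_vector_C}, while simultaneously tracking the $-n_{t-m}$ linear coefficient through the iterated applications of \eqref{q_sum_lemma_1} to produce the exact $\chi(m > i)$ shifts in the exponents. Once this bookkeeping is carried out, the argument is purely mechanical, following the same template as the $m=1$ case.
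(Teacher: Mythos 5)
Your proposal follows essentially the same route as the paper's proof: evaluate the $n_t$-sum in \eqref{Ctm_multisum} via \eqref{q_sum_lemma_2} with $c=n_{t-m}$, treat the degenerate cases $n_{t+a}=n_{t-a}$ as $k_a=0$ (with the vanishing at $a=m$ forcing $k_m\geq 1$), and then iterate \eqref{q_sum_lemma_1}, the shifted linear terms $q^{-\sum_{i=1}^{t-m}n_i}$ producing the $\chi(m>i)$ shifts. The bookkeeping you flag as the main obstacle is resolved exactly as you anticipate, so the plan is correct.
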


\begin{proof}
 The proof is similar to that of Proposition~\ref{prop:Ct_hypergeometric}.   We begin by treating the sum over $n_t$, which is
\begin{equation} \label{sum_nt}
\sum_{n_t=n_{t-1}}^{n_{t+1}} \frac{(-1)^{n_t}q^{\binom{n_t}{2}-n_{t-1}n_t}(1-q^{n_t-\chi(t > m)n_{t-m}})}{(q)_{n_{t+1}-n_t}(q)_{n_t-n_{t-1}}}.
\end{equation}
Assuming for the moment that $n_{t+1} > n_{t-1}$, the second part of identity~\eqref{q_sum_lemma_2} enables us to evaluate this sum, giving 
\begin{multline}
    -q^{t-n-1}C_n^{(t,m)}(q)
    =
    \\
    \sum_{n+1\geq n_{2t-1} \geq \cdots
      \geq n_{t+2} \geq n_{t+1}
      > n_{t-1} \geq n_{t-2} \geq \cdots \geq n_1 \geq 0}
    (-1)^{1+n_{t-1}}
    q^{
      \sum_{i=1}^{t-1} n_{t+i}^{~2} - \sum_{i=1}^{t-2} n_in_{i+1} - \sum_{i=1}^{t-m}n_i 
    }
    \\
    \times
    \frac{
      q^{-\binom{n_{t-1}}{2}
      }
    }{
      (q)_{n+1-n_{2t-1}} \cdots (q)_{n_{t+2}-n_{t+1}} 
      (q)_{n_{t-1}-n_{t-2}} \cdots (q)_{n_2-n_1} (q)_{n_1}
    } . \label{sum_nt_Cmt}
  \end{multline}
  The rest of the proof is the same as the proof of
  Proposition~\ref{prop:Ct_hypergeometric}.
  The only difference between equations~\eqref{sum_nt_Ct} and~\eqref{sum_nt_Cmt} is that the latter contains the term $q^{-\sum_{i=1}^{t-m} n_i}$ instead of the term $q^{-\sum_{i=1}^{t-1} n_i}$, which results in~\eqref{def_vector_C} instead of~\eqref{Cfunc_simple}.
	
Now suppose that $n_{t+1} = n_{t-1}$.  The sum~\eqref{sum_nt} on $n_t$ is trivial and reduces to 
\begin{equation}
(-1)^{n_{t-1}}q^{-\binom{n_{t-1}+1}{2}}(1-q^{n_{t-1} - \chi(t>m)n_{t-m}}).
\end{equation}
This corresponds to $k_1=0$, and the sum on $n_{t-1}$ is then 
\begin{equation}
\sum_{n_{t-1}=n_{t-2}}^{n_{t+2}} \frac{(-1)^{n_{t-1}}q^{\binom{n_{t-1}}{2}-n_{t-2}n_{t-1}}(1-q^{n_{t-1}-\chi(t > m)n_{t-m}})}{(q)_{n_{t+2}-n_{t-1}}(q)_{n_{t-1}-n_{t-2}}}.
\end{equation}	
If $n_{t-2} = n_{t+2}$ then we collapse the sum again and obtain $k_2=0$, continuing in this way until $n_{t+a} > n_{t-a}$, and then applying~\eqref{q_sum_lemma_1} and arguing as usual.  Note that if $n_{t+m} = n_{t-m}$ then the sum vanishes, so we have 
$k_m \geq 1$.          
\end{proof}


Using the expression for $C_n^{(t,m)}(q)$, we are now prepared to
generalize Definition~\ref{Utdef}.
\begin{definition}
The generalized $U$-function $U_t^{(m)}(x;q)$ is defined by
\begin{equation}
  \begin{aligned}[t]
    U_t^{(m)}(  x; q) 
    & :=
    \sum_{n=0}^\infty
    C_n^{(t,m)}(  q)
    \,
    (-x \, q)_n \, (-x^{-1} q )_n  
    \\
    & =
    q^{-t}
    \sum_{\substack{
        k_t \geq \dots \geq k_1 \geq 0
        \\
        k_m \geq 1
      }}
    (-x q)_{k_t -1}  \,  (-x^{-1} q)_{k_t -1}  \, q^{k_t}
    \\
    & \qquad \qquad \times
    \prod_{i=1}^{t-1}
    q^{k_i^{~2}} \,
    \begin{bmatrix}
      \displaystyle
      k_{i+1} - k_i - i 
      + \sum_{j=1}^{i} \left( 2 k_j + \chi(m>j) \right)
      \\
      k_{i+1} - k_i
    \end{bmatrix}_q .
  \end{aligned}
\end{equation}
\end{definition}

By construction, $U_t^{(m)}(-1;q)$ is dual to $F_t^{(m)}(q)$ as follows.
\begin{theorem} \label{thm:Ftm_and_Utm}
  \begin{equation}
    \label{general_F_U}
    F_t^{(m)}(    \zeta_N^{~-1})
    =
    U_t^{(m)}(    -1;     \zeta_N) .
  \end{equation}
\end{theorem}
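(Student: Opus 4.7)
The plan is to mirror the proof of Theorem~\ref{thm:Ft_and_Ut} essentially line-for-line, since the heavy lifting — the closed $q$-hypergeometric formula for $C_n^{(t,m)}(q)$ and the evaluation $J_N^{(t,m)}(\zeta_N) = F_t^{(m)}(\zeta_N^{-1})$ — has already been carried out in the preceding propositions. What remains is simply to glue these pieces together through the cyclotomic expansion.

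The first step is to observe that the second displayed form in the definition of $U_t^{(m)}(x;q)$ matches the sum of $C_n^{(t,m)}(q)$ against $(-xq)_n(-x^{-1}q)_n$. More precisely, the definition can be read as
\begin{equation*}
  U_t^{(m)}(x;q) = \sum_{n \geq 0} C_n^{(t,m)}(q)\,(-xq)_n\,(-x^{-1}q)_n,
\end{equation*}
which is the vector-valued analog of~\eqref{define_Ut2}. Substituting $x = -q^N$ turns $(-xq)_n$ into $(q^{1+N})_n$ and $(-x^{-1}q)_n$ into $(q^{1-N})_n$, so comparison with the cyclotomic expansion~\eqref{Jm_C} yields the analog of~\eqref{J_and_Ut}:
\begin{equation*}
  J_N^{(t,m)}(q) = U_t^{(m)}(-q^N;\,q).
\end{equation*}

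The final step is to specialize $q = \zeta_N$, which collapses $-q^N$ to $-1$ and therefore gives $J_N^{(t,m)}(\zeta_N) = U_t^{(m)}(-1;\,\zeta_N)$. Combining this with the identity $J_N^{(t,m)}(\zeta_N) = F_t^{(m)}(\zeta_N^{-1})$ established in~\eqref{Jtm_F_N} produces the claimed duality. I do not anticipate any real obstacle: the closed form for $C_n^{(t,m)}(q)$ and the evaluation at roots of unity (which for the $m=1$ case required the mirror symmetry~\eqref{mirror_J}, but here is built directly into the definition of $J_N^{(t,m)}$) already do the work, and the theorem amounts to the purely formal assembly step, exactly as in the proof of Theorem~\ref{thm:Ft_and_Ut}.
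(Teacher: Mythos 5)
Your proposal is correct and follows the paper's own proof essentially step for step: identify $U_t^{(m)}(-q^N;q)$ with $J_N^{(t,m)}(q)$ via the cyclotomic expansion~\eqref{Jm_C}, specialize $q=\zeta_N$ so that $-q^N$ becomes $-1$, and invoke~\eqref{Jtm_F_N}. Your remark that the mirror-symmetry step of the $m=1$ case is here absorbed into the definition of $J_N^{(t,m)}$ is also accurate.
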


\begin{proof}
  We have
  $J_N^{(t,m)}(q) = U_t^{(m)}(-q^N; q)$
  from~\eqref{Jm_C}, thus we get
  \begin{equation}
    J_N^{(t,m)}(\zeta_N) = U_t^{(m)}(-1; \zeta_N) .
  \end{equation}
  With the help of~\eqref{Jtm_F_N}, we get~\eqref{general_F_U}.
\end{proof}

We end this section with the Hecke-type formula for $U_t^{(m)}(x;q)$.   These follow just as those for $U_t(x;q)$ in Theorem~\ref{thm:HeckeUt}, using the Bailey pair in Proposition~\ref{prop:Ctm_multisum} in place of the Bailey pair in Proposition~\ref{prop:C_multisum}
\begin{theorem} \label{thm:HeckeUtm}
  \begin{align}
    U_t^{(m)}(-x;q)
    &=
    -q^{-\frac{t}{2}-\frac{m}{2}+\frac{3}{8}}
    \frac{(x q)_{\infty} ( q/x)_\infty}{
       (q)_\infty ^2
    }
    \\
    &\times
    \Biggl(
      \sum_{\substack{r,s \geq 0 \\ r \not \equiv s \pmod{2}}} -
      \sum_{\substack{r,s < 0 \\ r \not \equiv s \pmod{2}}}
    \Biggr)
    \frac{(-1)^{\frac{r-s-1}{2}}
      q^{\frac{1}{8}r^2+
        \frac{4t+3}{4} r s + \frac{1}{8}s^2+\frac{1+m+t}{2} r
        +
        \frac{1-m+t}{2} s
      }
    }{
      1-x q^{ \frac{r+s+1}{2} }
    } \nonumber \\
		 &=
    -q^{-\frac{t}{2}-\frac{m}{2}+\frac{3}{8}}
    \frac{(x q)_{\infty} (q/x)_\infty}{
      (q)_\infty ^2}
    \\
    &\times
    \Biggl(
      \sum_{\substack{r,s,u \geq 0 \\ r \not \equiv s \pmod{2}}} +
      \sum_{\substack{r,s,u < 0 \\ r \not \equiv s \pmod{2}}}
    \Biggr)
    (-1)^{\frac{r-s-1}{2}}x^u
      q^{\frac{1}{8}r^2+
        \frac{4t+3}{4} r s + \frac{1}{8}s^2+\frac{1+m+t}{2} r
        +
        \frac{1-m+t}{2} s + u\frac{r+s+1}{2}}  .  \nonumber
    \end{align}
\end{theorem}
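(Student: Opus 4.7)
The plan is to follow the proof of Theorem \ref{thm:HeckeUt} almost verbatim, substituting the Bailey pair relative to $1$ constructed in Proposition \ref{prop:Ctm_multisum}. That pair differs from the one used in Theorem \ref{thm:HeckeUt} only in the linear coefficient of $k$ inside the Gauss-sum part of $\alpha_n'$: the exponent $-(2t-1)k/2$ is replaced by $-(2t+1-2m)k/2$, reflecting the $mk$ appearing in the definition \eqref{Jpoly_tm} of $J_N^{(t,m)}(q)$.

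First I would apply the Bailey lemma with $a=1$ and $b=1/c=x$ to the pair $(\alpha_n', \beta_n' = -q^{t-n} C_{n-1}^{(t,m)})$. Combining the result with the identity
\[
U_t^{(m)}(-x;q) = \sum_{n \geq 0} C_n^{(t,m)}(q)\,(xq)_n\,(q/x)_n,
\]
which is immediate from the definition of $U_t^{(m)}$, yields
\[
U_t^{(m)}(-x;q) = -\,\frac{q^{-t}(xq)_\infty (q/x)_\infty}{(q)_\infty^2} \sum_{n \geq 1}\sum_{k=-n}^{n-1} \frac{(-1)^k q^{(t+1)n^2 - (t+\frac{1}{2})k^2 - \frac{2t+1-2m}{2}k}\,(1-q^{2n})}{(1-xq^n)(1-q^n/x)}.
\]
Next, exactly as in the scalar case, I would split this using
\[
\frac{1-q^{2n}}{(1-xq^n)(1-q^n/x)} = \frac{1}{1-xq^n} + \frac{q^n/x}{1-q^n/x}
\]
and re-index the second piece by $n \mapsto -n$ (with the corresponding flip of the $k$-range), merging the two into a single Hecke-type object with one sum over $n \geq 1$ and an opposite-sign sum over $n \leq 0$.

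Finally, the substitution $n = (r+s+1)/2$, $k = (r-s-1)/2$ leaves the quadratic form $\tfrac{1}{8}r^2 + \tfrac{4t+3}{4}rs + \tfrac{1}{8}s^2$ intact (it is inherited from the scalar case), while the modified linear term $-\tfrac{2t+1-2m}{2}k$ shifts the coefficients of $r$ and $s$ to $(1+m+t)/2$ and $(1-m+t)/2$ respectively, and contributes an extra constant $q^{(1-m)/2}$ that promotes the overall prefactor from $q^{-t/2-1/8}$ to $q^{-t/2-m/2+3/8}$. This produces the first equality of Theorem \ref{thm:HeckeUtm}; the second follows by expanding $1/(1-xq^{(r+s+1)/2})$ as a geometric series, introducing the third summation index $u$. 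The only real work is careful bookkeeping of this $m$-dependent shift through the change of variables and the split-and-reindex step; no new structural obstacle arises compared to the scalar case, and the consistency check $m=1$ recovers Theorem \ref{thm:HeckeUt} exactly.
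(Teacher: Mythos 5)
Your proposal is correct and follows essentially the same route as the paper, which proves this theorem by exactly the reduction you describe: repeat the proof of Theorem~\ref{thm:HeckeUt} with the Bailey pair of Proposition~\ref{prop:Ctm_multisum} in place of that of Proposition~\ref{prop:C_multisum}. Your bookkeeping of the $m$-dependent linear term through the substitution $n=(r+s+1)/2$, $k=(r-s-1)/2$ (giving the coefficients $\frac{1+m+t}{2}$, $\frac{1-m+t}{2}$ and the prefactor $q^{-t/2-m/2+3/8}$) checks out and in fact supplies more detail than the paper records.
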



\section{Concluding Remarks}

In this paper we have studied a family of quantum modular forms $U_t(-1;q)$ based on the colored
Jones polynomial for the torus knot $T_{(2,2t+1)}$.  We have extended the duality between $U_1(-1;q)$ and $F_1(q)$ to general $t$ and determined a Hecke-type expansion for $U_t(x;q)$.   We have further generalized these results to the vector-valued setting.  

We close with two remarks. 

First, in~\cite{BryOnoPitRho12a,AndRhoZwe13a} the modular
transformation formula was given for $U_1(-1;q)$ for generic $q$,
based on an expression for $U_1(x;q)$ in terms of Appell--Lerch series
in~\cite{AndRhoZwe13a}.   The expression in terms of Appell--Lerch
series also shows that $U_1(x;q)$ is a mixed mock modular form for
generic roots of unity $x \neq -1$ and a mock theta function when $x =
\pm i$.   We expect similar results in the general case, and it is to
be hoped that the Hecke series expansions established in this paper
will be useful for determining modular transformation formulae for
$U_t^{(m)}(x;q)$ for generic $q$ and for $x$ a root of unity.
For now, we only know that
by Theorem~\ref{thm:Ftm_and_Utm},
 $U_t^{(m)}(-1;q)$ fulfills~\eqref{modular_Ftm} when $q$ is a root of unity.  

Second, both $U_1(x;q)$ and $F_1(q)$ are interesting combinatorial generating functions.  The two-variable function $U_1(x;q)$ can be interpreted in terms of strongly unimodal sequences and their ranks~\cite{BryOnoPitRho12a,BriFolRho14a}, while $F_1(1-q)$ is the generating function for certain linearized chord diagrams~\cite{DZagie01a} (as well as a number of other objects - see~\cite[A022493]{Sloane} for an overview with references.)  Moreover, there are many nice congruences for the coefficients of $F_1(1-q)$~\cite{AndSel14a} and $U_1(1;q)$~\cite{BryOnoPitRho12a}.  The generalized $U_t^{(m)}(x;q)$ and $F_t^{(m)}(1-q)$ may have interesting combinatorial interpretations and congruence properties, as well.

\section*{Acknowledgements}
KH thanks S.~Zwegers for discussions.
He also thanks organizers
of
``Lectures on $q$-Series and Modular Forms''
(KIAS, July 2013),
``Modular Functions and Quadratic Forms --- number theoretic
delights''
(Osaka University, December 2013),
``Low Dimensional Topology and Number Theory''
(MFO, August 2014).
The work of KH is supported in part by
JSPS KAKENHI Grant Number 23340115, 24654041, 26400079.

\appendix
\section{Examples: $t=2$ and $3$}

Here we record the set of generalized Kontsevich--Zagier series $F_t^{(m)}(q)$ and generalized $U$-functions $U_t^{(m)}(x;q)$ for $t=2$ and $3$. 

When $t=2$, the set of generalized Kontsevich--Zagier series is
\begin{align}
  F_{2}^{(1)}(q)
  & =
  q^2 
  \sum_{n=0}^\infty (q)_n
  \sum_{k=0}^n q^{k(k+1)} \,
  \begin{bmatrix}
    n \\ k
  \end{bmatrix}_q ,
  \\
  F_{2}^{(2)} (q)
  & =
  q^2
  \sum_{n=1}^\infty (q)_{n-1}
  \sum_{k=0}^n
  q^{k^2} \,
  \begin{bmatrix}
    n \\ k
  \end{bmatrix}_q .
\end{align}
The dual $U$-functions,
satisfying
$F_2^{(m)}(\zeta_N^{~ -1})=
U_2^{(m)}(-1; \zeta_N)$,
are given by
\begin{gather}
  \begin{aligned}[t]
    U_{2}^{(1)}(x;q)
    & =
    \sum_{n=0}^\infty
    ( -x q)_n  \, (-x^{-1}q)_n
    q^{n-1} \sum_{k=1}^{n+1} q^{k^2} \,
    \begin{bmatrix}
      n+k \\
      2k-1
    \end{bmatrix}_q
    \\
    & =
    1 + q + (x+2+x^{-1}) q^2
    + (2x+3+2x^{-1}) q^3 + 
    (3x+6+3x^{-1}) q^4+ \cdots
  \end{aligned}
  \\
  \begin{aligned}[t]
    U_{2}^{(2)}(x;q)
    & =
    \sum_{n=0}^\infty
    (-x q)_n (-x^{-1}q)_n \,  q^{n-1}
    \sum_{k=0}^{n+1} q^{k^2} \,
    \begin{bmatrix}
      n+k+1 \\
      2k
    \end{bmatrix}_q
    \\
    & =
    q^{-1} + 2 + (x+2+x^{-1}) q + (2x+4+2x^{-1}) q^2
    + (4x+6+4x^{-1})q^3 + \cdots
  \end{aligned}
\end{gather}

When $t=3$ the set of generalized Kontsevich--Zagier series is 
\begin{align}
  F_{3}^{(1)}(q)
  & =
  q^3 
  \sum_{n=0}^\infty (q)_n
  \sum_{k=0}^n q^{k(k+1)} \begin{bmatrix}
    n \\ k
  \end{bmatrix}_q
	\sum_{j=0}^k q^{j(j+1)}\,
  \begin{bmatrix}
    k \\ j
  \end{bmatrix}_q ,
  \\
F_{3}^{(2)}(q)
  & =
  q^3 
  \sum_{n=0}^\infty (q)_n
  \sum_{k=1}^{n+1} q^{k(k-1)} \begin{bmatrix}
    n \\ k-1
  \end{bmatrix}_q
	\sum_{j=0}^k q^{j^2}\,
  \begin{bmatrix}
    k \\ j
  \end{bmatrix}_q ,
  \\
  F_{3}^{(3)}(q)
  & =
  q^3 
  \sum_{n=1}^\infty (q)_{n-1}
  \sum_{k=0}^n q^{k^2} \begin{bmatrix}
    n \\ k
  \end{bmatrix}_q
	\sum_{j=0}^k q^{j^2}\,
  \begin{bmatrix}
    k \\ j
  \end{bmatrix}_q.
\end{align}

The dual $U$-functions, satisfying
$F_3^{(m)}(\zeta_N^{~ -1})=
U_3^{(m)}(-1; \zeta_N)$,
are given by
\begin{gather}
  \begin{aligned}[t]
    U_{3}^{(1)}(x;q)
    & =
    \sum_{n=0}^\infty
    ( -x q)_n  \, (-x^{-1}q)_n
    q^{n-2} \sum_{k=1}^{n+1} q^{k^2} \,
    \begin{bmatrix}
      n+k-1 \\
      2k-1
    \end{bmatrix}_q
		\sum_{j=1}^{k} q^{j^2} \,
    \begin{bmatrix}
      n+k+2j-1 \\
      2k+2j-2
    \end{bmatrix}_q
    \\
    & =
    1 + q + (x+2+x^{-1}) q^2
    + (2x+4+2x^{-1}) q^3 + 
    (4x+7+4x^{-1}) q^4+ \cdots
  \end{aligned}
  \\
  \begin{aligned}[t]
    U_{3}^{(2)}(x;q)
    & =
    \sum_{n=0}^\infty
    ( -x q)_n  \, (-x^{-1}q)_n
    q^{n-2} \sum_{k=1}^{n+1} q^{k^2} \,
    \begin{bmatrix}
      n+k \\
      2k
    \end{bmatrix}_q
		\sum_{j=0}^{k} q^{j^2} \,
    \begin{bmatrix}
      n+k+2j \\
      2k+2j-1
    \end{bmatrix}_q
    \\
    & =
    q^{-1} + 2 + (x+3+x^{-1}) q
    + (3x+5+3x^{-1}) q^2 + 
    (5x+10+5x^{-1}) q^3+ \cdots
  \end{aligned}
  \\
	\begin{aligned}[t]
    U_{3}^{(3)}(x;q)
    & =
    \sum_{n=0}^\infty
    ( -x q)_n  \, (-x^{-1}q)_n
    q^{n-2} \sum_{k=0}^{n+1} q^{k^2} \,
    \begin{bmatrix}
      n+k \\
      2k
    \end{bmatrix}_q
		\sum_{j=0}^{k} q^{j^2} \,
    \begin{bmatrix}
      n+k+2j+1 \\
      2k+2j
    \end{bmatrix}_q
    \\
    & =
    q^{-2} + 2q^{-1} + (x+3+x^{-1}) 
    + (2x+5+2x^{-1}) q + 
    (5x+8+5x^{-1}) q^2+ \cdots
  \end{aligned}
\end{gather}


\end{document}